\newcommand{\dis}{\displaystyle}
\theoremstyle{plain}
\newtheorem{thm}{Theorem}[section]   
\newtheorem{prop}[thm]{Proposition}
\newtheorem{lem}[thm]{Lemma}
\newtheorem{Def}[thm]{Definition}
\theoremstyle{definition}
\newtheorem{rem}[thm]{Remark}
\newtheorem{clm}[thm]{Claim}
\newcommand{\el}{\ell}
\newcommand{\ra}{\;\rightarrow\;}
\newcommand{\bi}{\beta}
\newcommand{\de}{\delta }
\newcommand{\e}{\varepsilon }
\newcommand{\f}{\varphi}
\newcommand{\vPsi}{\varPsi}
\newcommand{\la}{\lambda }
\newcommand{\si}{\sigma }
\newcommand{\ti}{\tau }
\newcommand{\oo}{\omega}
\newcommand{\C}{\mathbb{C}}
\newcommand{\R}{\mathbb{R}}
\newcommand{\N}{\mathbb{N}}
\newcommand{\bP}{\mathbb{P}}
\newcommand{\ssum}{\sum\limits}
\newcommand{\tg}{\widetilde{g}}
\newcommand{\tn}{\widetilde{n}}
\newcommand{\tS}{\widetilde{S}}
\newcommand{\tT}{\widetilde{T}}
\newcommand{\tq}{\tilde{q}}
\newcommand{\ld}{\ldots}
\newcommand{\sm}{\smallsetminus}
\newcommand{\qb}{$\quad\blacksquare$}
  \newtheorem{theorem}{Theorem}
\newtheorem{lemma}[theorem]{Lemma}
 \newcommand{\lan}{\langle}                       
  \newcommand{\ran}{\rangle}
\begin{document}
\pagestyle{myheadings}
\markboth{Generalized Harmonic Functions on Trees: Universality and Frequent Universality}{N. Biehler, E. Nestoridi and V. Nestoridis}
\title{\bf Generalized Harmonic Functions on Trees: Universality and Frequent Universality}
%
%
\author{N. Biehler, E. Nestoridi and V. Nestoridis}
\date{}
\maketitle
\begin{abstract}
Recently, harmonic functions and frequently universal harmonic functions on a tree $T$ have been studied, taking values on a separable Fr\'{e}chet space $E$ over the field $\C$ or $\R$. In the present paper, we allow the functions to take values in a vector space $E$ over a rather general field $\mathbb{F}$. The metric of the separable topological vector space $E$ is translation invariant and instead of harmonic functions we can also study more general functions defined by linear combinations with coefficients in $\mathbb{F}$. Unlike the past literature, we don't assume that $E$ is complete and therefore we present a new argument, avoiding Baire's theorem.
\end{abstract}
{\em AMS classification numbers}: 05C05, 60J50, 46M99, 30K99.\smallskip\\
{\em Keywords and phrases}: Tree, boundary of the tree, universality, frequent universality, topological and algebraic genericity.
\section{Introduction}\label{sec1}
\noindent
 In 1945, Menchoff \cite{13} proved the existence of a plethora of
trigonometric series $\break \sum_{n=-\infty}^{+\infty}
a_ne^{int}$, which satisfy  that every
complex measurable $2 \pi$-periodic function is the almost everywhere limit
of a subsequence of the partial sums. Moreover
the sequence $(a_j)$ can be chosen in $c_0(\mathbb{Z})$. This kind of universality,
as well as many others are proven to be generic with the aid of Baire's
Category Theorem \cite{6,9,10}. 
Recently, in analogy to Menchoff's
universality, the generic existence of universal
martingales on trees was established. The study of harmonic functions on trees and famous properties, such as universality, frequent universality and algebraic genericity has been a recent  topic of interest in analysis \cite{1,2,3,7}.  

As in these references, let $T$ be the set of vertices of a rooted tree with root $x_0$ and let $T_n$ be the set of vertices at $n$--th level of the tree, so that $T=\dis\bigcup^\infty_{n=0}T_n$, with $T_0=\{x_0\}$. We assume that all $T_n$ are finite. Also, for each $x\in T_n$ we assume that  the set $S(x) \subset T_{n+1} $ of children of $x$, contains at least two elements. Notice that the family $(S(x))_{x\in T_n}$ is a partition of $T_{n+1}$. 

Let $\mathbb{F}$ be a field, which has a separable topology. Let $E$ be a (non-necessarily complete) topological vector space over $\mathbb{F}$, that is separable metrizable with a translation invariant metric. 

A function $f:T\ra E$ is called generalized harmonic, if for every $x\in T$ we have that
\begin{eqnarray}
f(x)=\sum_{y\in S(x)}w(x,y)f(y)  ,\label{eq1}
\end{eqnarray}
where $w(x,y) \in \mathbb{F}\setminus \{ 0\}$ for $y \in S(x) $ are given elements of $\mathbb{F}$.

 We denote the set of generalized harmonic functions by $H(T,E)$.  To define universality, we assign probability weights $q(x,y)$, to every $x, y \in T$, such that $q(x,y)>0$ if and only if  $y\in S(x)$. The probability weights satisfy $\ssum_{y\in S(x)}q(x,y)=1$ for all $x\in T$, which in other words says that the matrix $Q=(q(x,y))_{x ,y\in T}$ is the transition matrix of a non-backtracking random walk on $T$.
 
 
 We define the boundary $\partial T$ of the tree $T$ to be the set of all infinite geodesics starting at $x_0$; that is, all $e=\{z_n:n=0,1,2,\ld\}$, such that $z_0=x_0$ and $z_{n+1}\in S(z_n)$ for every $n=0,1,2,\ld$\,. A vertex $x\in T$ defines a boundary sector $B_x\subset\partial T$, which is the set of all $e\in\partial T$ such that $x\in e$. For $x \in T_n, $ we define $p_n(B_x)$ to be the product $\dis\prod^{n-1}_{j=0}q(y_j,y_{j+1})$, where $y_0=x_0, y_n=x$ and $y_{j+1} \in S(T_{j})$ for all $j \in \{ 0,1, \ldots, n-1\}$ are the vertices of the unique path from the root to $x$. Therefore, for each $n$, $p_n$ can be viewed as a probability measure on $\{B_x, x \in T_n\}$.

The family $\{B_x\}_{x\in T_n}$ generates a $\si$-algebra $M_n$ on $\partial T$ and the above probabilities $p_n(B_x)$ extend to a probability measure $\bP_n$ on $M_n$. One can check that $M_n\subset M_{n+1}$ and $\bP_{n+1}|_{M_n}=\bP_n$ for every $n=0,1,2,\ld$\;. The Kolmogorov Consistency Theorem \cite{11} implies that there exists a unique probability measure $\bP$ on the $\si$-algebra $M$ of subsets of $\partial T$ generated by the union $\dis\bigcup^\infty_{n=0}M_n$, such that $\bP|_{M_n}=\bP_n$ for every $n \in\mathbb{N} $.

\begin{Def}\label{def2.1}
If $f\in E^T$ and $n\in\{0,1,2,\ld\}$ we define $\oo_n(f): \partial T \rightarrow E$ to be a function defined as follows. Let $e\in\partial T$ and $z$ the unique point belonging to $e\cap T_n$; then we set $\oo_n(f)(e)=f(z)$. Obviously, $\oo_n(f)$ is $M_n$-measurable and hence $M$-measurable.
\end{Def}

A generalized harmonic function $f$ is said to be universal if the sequence $\oo_n(f)$, $n=0,1,2,\ld$ is dense in the space $L^0(\partial T,E)$ of measurable functions $h:\partial T\ra E$ endowed with the topology of convergence in probability. Our first main result concerns the set $U(T,E)$ of universal, generalized harmonic functions. 
\begin{theorem}\label{one}
 The set $U(T,E)$ is a $G_\de$-dense subset of the space $H(T,E)$. 
\end{theorem}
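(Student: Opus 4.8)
The plan is to realize $H(T,E)$ as a subspace of $E^T=\prod_{x\in T}E$ with the product (pointwise) topology. Since every $T_n$ is finite and $E$ is separable metrizable with a translation invariant metric, $E^T$ is separable metrizable for a translation invariant metric $\rho$, and the relations \eqref{eq1} are closed conditions, so $H(T,E)$ inherits these properties. Likewise $L^0(\partial T,E)$ is separable metrizable for a translation invariant metric $\tau$ inducing convergence in probability; fix a countable dense sequence $(h_j)_{j\ge 1}$ in it. I would first record the continuity of the evaluation maps $f\mapsto\oo_n(f)$: since $\oo_n(f)=\ssum_{z\in T_n}f(z)\,\mathbf 1_{B_z}$ depends only on the finitely many coordinates $(f(z))_{z\in T_n}$, each entering continuously into $L^0$, the map $H(T,E)\ra L^0(\partial T,E)$ is continuous.

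Granting this, the $G_\de$ assertion is immediate. By density of $(h_j)$, a harmonic $f$ is universal exactly when $\{\oo_n(f)\}$ comes arbitrarily close to each $h_j$, so
\[
U(T,E)=\bcap_{j,m\ge 1}\ \bcup_{n\ge 0}\ \{f\in H(T,E):\tau(\oo_n(f),h_j)<1/m\},
\]
and each set in the union is open by the continuity above; hence $U(T,E)$ is a countable intersection of open sets.

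For density --- the step where Baire's theorem is usually invoked and where completeness of $E$ would normally be required --- I would argue constructively. Given $f_0\in H(T,E)$ and $\e>0$, choose $N$ so large that any $g$ agreeing with $f_0$ on $T_0\cup\cdots\cup T_N$ satisfies $\rho(g,f_0)<\e$. Set $g=f_0$ there, enumerate all pairs $(j,m)$ as $(j_k,m_k)_{k\ge 1}$, and build $g$ in blocks over levels $N=n_0<n_1<n_2<\cdots$. The main device is that iterating \eqref{eq1} gives $g(x)=\ssum_{z}W(x,z)g(z)$, the sum over descendants $z$ of $x$ at a deeper level, where $W(x,z)$ is the product of the nonzero weights $w$ along the path $x\to z$. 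Having fixed $g$ up to level $n_{k-1}$, I prescribe at level $n_k$ the values of a level-$n_k$ simple function $\tilde h_k$ with $\tau(\tilde h_k,h_{j_k})$ small (possible for $n_k$ large, since level-$n$ simple functions are dense in $L^0(\partial T,E)$, $\bigcup_n M_n$ generating $M$), except that for each parent $x\in T_{n_{k-1}}$ I reserve one descendant $z^*(x)$ at level $n_k$ and solve the single relation $g(z^*(x))=W(x,z^*(x))^{-1}\big(g(x)-\ssum_{z\ne z^*(x)}W(x,z)g(z)\big)$, which is solvable because every weight lies in $\mathbb F\setminus\{0\}$. This restores the required up-propagation to the fixed $g|_{T_{n_{k-1}}}$, at the cost of corrupting $\oo_{n_k}(g)$ only on $\bigcup_{x\in T_{n_{k-1}}}B_{z^*(x)}$.

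The point that replaces Baire is that convergence in probability is insensitive to large deviations on sets of small measure. Choosing each $z^*(x)$ to be a minimal-probability descendant of $x$ gives $\bP(B_{z^*(x)})\le\bP(B_x)\,2^{-(n_k-n_{k-1})}$, every vertex having at least two children, so the total corrupted probability is at most $2^{-(n_k-n_{k-1})}$; taking $n_k$ large forces both this quantity and $\tau(\tilde h_k,h_{j_k})$ below $1/(2m_k)$, whence $\tau(\oo_{n_k}(g),h_{j_k})<1/m_k$. Extending $g$ harmonically and arbitrarily over the remaining levels, the resulting $g\in H(T,E)$ satisfies $\rho(g,f_0)<\e$ and meets every $(h_j,1/m)$, hence is universal. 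I expect the delicate part to be the bookkeeping in this block construction --- simultaneously honouring the finitely many up-propagation constraints inherited from the previous block while keeping the correction sectors of small probability --- together with verifying that $\tau$ genuinely controls the approximation through the measure of the corrupted set.
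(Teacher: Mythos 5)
Your proposal is correct and follows essentially the same route as the paper: the identical $G_\de$ decomposition into $\bigcap_{j,m}\bigcup_n$ of open sets, and for density the same device of reserving one minimal-probability descendant $z^*(x)$ per vertex to absorb the harmonicity constraint while prescribing the remaining level-$n_k$ values to approximate a dense sequence (this is exactly the paper's Lemma \ref{lem3.5}, proved there by a level-by-level induction rather than your one-shot solve using the iterated weights $W(x,z)$). The only detail worth making explicit is that the values on the levels strictly between $n_{k-1}$ and $n_k$ must be supplied by propagating (\ref{eq1}) upward from level $n_k$, the value so produced at each $x\in T_{n_{k-1}}$ agreeing with the already-fixed $g(x)$ precisely because of your choice of $g(z^*(x))$.
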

Since we don't assume completeness, we prove Theorem \ref{one} without the use of Baire's theorem, which is the traditional tool to proving such a result. 
The next result concerns another property of $U(T,E)$ called algebraic genericity.
 \begin{theorem}[algebraic genericity]\label{two}
The set $U(T,E)$ contains a vector space, except 0, dense in $H(T,E)$.
\end{theorem}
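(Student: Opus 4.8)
The plan is to build, by a single explicit recursion (so that Baire's theorem is never invoked and completeness of $E$ is never used), a linearly independent family $(f_k)_{k\ge 1}\subset U(T,E)$ whose $\mathbb{F}$-linear span $V$ is dense in $H(T,E)$ and satisfies $V\setminus\{0\}\subset U(T,E)$; this $V$ is the required subspace. The guiding observation is that every evaluation map $\oo_n:H(T,E)\to L^0(\partial T,E)$ of Definition \ref{def2.1} is $\mathbb{F}$-linear, so universality of $f$ is precisely density of the orbit $(\oo_n(f))_n$. The engine of all approximations is the freedom in the harmonic relation \eqref{eq1}: since each $S(x)$ has at least two elements and the weights $w(x,y)$ are nonzero, once $f(x)$ is fixed one may prescribe the values on all children but one and solve for the last, so a harmonic function can be built level by level from the root downward, with no upward conflicts. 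This is the same mechanism that drives Theorem \ref{one}, namely steering $\oo_n$ of a designated function to a prescribed simple target off a boundary set of arbitrarily small $\bP$-measure.

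The device that makes genericity \emph{algebraic}, while avoiding any case analysis over the (uncountably many) coefficient vectors, is a carrier/negligible dichotomy. I will arrange that for each index $k$ there are infinitely many ``deep'' levels $n$ along which $\oo_n(f_k)$ sweeps out a dense sequence of targets in $L^0(\partial T,E)$, while simultaneously $\oo_n(f_j)\to 0$ in probability for every $j\ne k$. Granting this, let $f=\sum_k b_kf_k\ne 0$ and pick $k$ with $b_k\ne 0$. Along $k$'s levels, $\oo_n(f)-b_k\,\oo_n(f_k)=\sum_{j\ne k}b_j\,\oo_n(f_j)\to 0$ in probability (a finite sum of negligible terms), so the orbit of $f$ approximates the set $\{b_k h: h\ \text{a target}\}$. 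Since $b_k\ne 0$ and scalar multiplication is a homeomorphism of $L^0(\partial T,E)$ (using only continuity of the vector operations and translation invariance of the metric), this set is dense, whence $f\in U(T,E)$. Thus the entire theorem is reduced to constructing one such family.

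For the construction I enumerate in a single list two kinds of tasks: universality tasks, indexed by a pair (an index $k$, a target $h$ ranging over the countable dense family of simple functions $\sum_{z\in T_n}d_z\mathbf 1_{B_z}$ with $d_z$ in a fixed countable dense $D\subset E$), each repeated infinitely often; and density tasks, indexed by $j$, demanding that some $f_{k(j)}$ agree with $g_j$ on an initial segment $T_0\cup\cdots\cup T_N$, where $(g_j)$ is a dense sequence of $H(T,E)$ (which exists, $H(T,E)$ being a subspace of the separable metrizable space $E^T$). Because $f_{k(j)}$ is harmonic and the $g_j$ are harmonic, agreement on $T_N$ forces closeness in the topology of $H(T,E)$, so meeting all density tasks yields $\overline V=H(T,E)$. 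I assign to the $t$-th task a fresh block of consecutive, strictly increasing levels, disjoint from all previous blocks and deep enough; on that block I use the downward freedom to make the designated function approximate its target (for a universality task) or to copy $g_j$ on the prescribed finite initial segment (for a density task), while steering all other currently active $f_j$ to be small off a set of $\bP$-measure $<2^{-t}$. Functions $f_k$ introduced only at a late stage are taken to vanish on all shallow levels below their block—admissible for harmonic functions by \eqref{eq1}—so at any fixed level only finitely many $f_j$ are active and must be made negligible. Linear independence is secured by making each $f_k$ do, at one of its blocks, something the finitely many earlier functions do not.

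The main obstacle is the simultaneous satisfaction of the three requirements—pointwise (exact, full-level) density matching, the probabilistic carrier/negligible dichotomy at deep levels, and global harmonicity—since density matching forces nonzero values on whole shallow levels whereas the dichotomy wants negligibility. The resolution is bookkeeping: all universality and negligibility blocks are placed strictly below the (growing) levels used for density matching, exploiting that a harmonic function may be nonzero on an initial segment, then return to being small in probability on most of $\partial T$, and later approximate a new target on a still deeper block; at every deep block only finitely many functions are active, and the $2^{-t}$ error budget keeps the negligible terms summable and vanishing along the relevant subsequence of levels. Verifying that these finitely many controlled errors and the fixed scalars $b_j$ combine to something tending to $0$ in probability uses only continuity of addition and scalar multiplication in $L^0(\partial T,E)$, consistent with the Baire-free, completeness-free philosophy of the paper.
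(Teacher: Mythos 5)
Your argument is correct in outline and rests on the same two mechanisms as the paper's proof: the level-by-level harmonic extension with a small exceptional boundary set (Lemma \ref{lem3.5}), and the trick of steering the universal summand toward $a^{-1}h$ while the remaining finitely many summands tend to $0$ in probability, using continuity of the vector operations on $L^0(\partial T,E)$. The organization, however, is genuinely different. The paper modularizes: it first proves (Theorem \ref{thm3.4}) that $U_\tau(T,E)$ is dense in $H(T,E)$ for \emph{every} infinite $\tau\subset\{0,1,2,\ldots\}$, then builds a decreasing chain $\tau\supset\tau_1\supset\tau_2\supset\cdots$ and functions $f_k\in U_{\tau_{k-1}}(T,E)$ with $\rho(f_k,h_k)<1/k$ and $\omega_n(f_j)\to 0$ along $\tau_k$ for $j\le k$. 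For a combination $a_1f_1+\cdots+a_mf_m$ it normalizes so that the \emph{last} coefficient $a_m$ is nonzero; then only the triangular condition that the earlier functions vanish along the levels where $f_m$ is universal is needed, and this comes for free from the nesting. You instead run one global recursion enforcing the symmetric dichotomy that along each $f_k$'s carrier levels \emph{all} other $f_j$ are negligible, so that you may select an arbitrary nonzero coefficient; this costs the extra bookkeeping about which functions are active at which block (and the mild tension you yourself note between the density tasks and negligibility on shallow levels), but it is self-contained --- you never need the classes $U_\tau$ for proper subsets $\tau$ --- and it makes the Baire-free, completeness-free character of the construction fully explicit. Two small remarks: choosing $k$ to be the largest index with $b_k\ne 0$ would let you drop the two-sided control and recover the paper's simpler triangular scheme; and the linear-independence step is not needed for the statement (indeed it follows automatically, since a vanishing combination with a nonzero coefficient would force the zero function to be universal).
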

 A function $f$ in $H(T,E)$ is called frequently universal, if for every non-void open set $V$ of $L^0(\partial T,E)$, the set $\{n\in\N:\oo_n(f)\in V\}$ has strictly positive lower density. The last result of this paper concerns the set $U_{FM}(T,E)$ of frequently universal functions.
 \begin{theorem}\label{three}
The set $U_{FM}(T,E)$ is a dense subset of $H(T,E)$. Under the additional assumption that $\ssum_{y\in S(x)}w(x,y)=1$ for all $x\in T$, $U_{FM}(T,E)\cup\{0\}$ contains a vector space dense in $H(T,E)$ and that $U_{FM}(T,E)$ is disjoint from a $G_\de$ and dense subset of $H(T,E)$.
 \end{theorem}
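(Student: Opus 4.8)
The plan is to treat the three assertions of Theorem~\ref{three} separately, in each case reducing frequent universality to the basic constructive mechanism already underlying Theorem~\ref{one}: given a generalized harmonic function prescribed on the first $N$ levels, one may extend it harmonically so that, at a chosen deeper level $n$, $\omega_n(f)$ coincides with any preassigned $M_m$-measurable simple target $h$ outside a set of $\bP$-probability as small as we like. The one genuine difficulty is the coupling between levels forced by \eqref{eq1}: since $f(x)$ is the $w$-weighted average of the values on $S(x)$, the values of $f$ on a deep level $T_n$ rigidly determine $\omega_k(f)$ for every $k<n$, so one cannot prescribe $\omega_n(f)$ freely at several levels at once. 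The device that resolves this, and which I would isolate as a lemma, is to carry the bulk of the target on a sub-sector of $\bP$-measure $>1-\e$ while diverting the averaging constraint into a single thin descendant sub-sector of $\bP$-measure $<\e$: there the corrective value is the unique element of $E$ making \eqref{eq1} hold (well defined because each $w(x,y)\ne 0$ and $E$ is an $\F$-vector space), its possibly large size being invisible to convergence in probability.

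For the density statement I would fix a countable dense sequence $(h_j)$ of simple targets in $L^0(\partial T,E)$ and a family $(A_j)$ of pairwise disjoint subsets of $\N$, each of strictly positive lower density and suitably separated; the existence of such a scheduling family is a standard combinatorial fact. Given $g\in H(T,E)$ and $N$, I would build $f$ agreeing with $g$ on $T_0\cup\cdots\cup T_N$ and then, proceeding from shallow to deep levels, arrange $\omega_n(f)=h_j$ off a set of probability $<\e_j$ for every $n\in A_j$. The crucial bookkeeping point is that no error accumulates: when passing to the next prescribed level the previously ``bad'' sub-sectors are simply re-used to carry the new target, only a freshly created thin sub-sector being sacrificed to satisfy \eqref{eq1}. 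Hence for each $j$ the set $\{n:\omega_n(f)\in V\}$ contains $A_j$ up to a negligible set whenever $V$ is a neighbourhood of $h_j$, so it has positive lower density and $f\in U_{FM}(T,E)$; as $f$ agrees with $g$ to level $N$, this proves density.

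For the last assertion I would exhibit an explicit $G_\delta$ and dense set disjoint from $U_{FM}(T,E)$. Fix a nonempty open $V_1\subset L^0(\partial T,E)$ whose closure is not all of the space and set $R=\bigcap_{k\ge 1}R_k$, where $R_k$ is the set of $f\in H(T,E)$ admitting a run of at least $k$ consecutive levels $n$ with $\omega_n(f)\notin\overline{V_1}$. Each condition $\omega_n(f)\notin\overline{V_1}$ is open in $f$, because $\omega_n(f)$ depends continuously on the finitely many values $f|_{T_n}$, so each $R_k$ is open and $R$ is $G_\delta$. If $f\in R$ then $\{n:\omega_n(f)\in V_1\}$ has gaps of every length and hence lower density $0$, so $f\notin U_{FM}(T,E)$. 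Density of $R$ is obtained exactly as above, inserting at deeper and deeper levels arbitrarily long runs on which $\omega_n(f)$ is held near a fixed simple function lying outside $\overline{V_1}$; in particular $U_{FM}(T,E)$ is of first category.

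Finally, for the algebraic genericity under $\ssum_{y\in S(x)}w(x,y)=1$, I would produce a dense linearly independent sequence $(\phi_m)$ in $H(T,E)$, with $\phi_m$ close to the $m$-th term of a fixed dense sequence, carrying pairwise disjoint sweeping schedules $S_m$ of positive lower density: on $S_m$ the function $\omega_n(\phi_m)$ runs densely through all of $L^0(\partial T,E)$, while on $\bigcup_{m'\ne m}S_{m'}$ one forces $\omega_n(\phi_m)$ to tend to $0$ in probability. The hypothesis $\ssum_{y\in S(x)}w(x,y)=1$ is exactly what makes the latter possible: it permits harmonic extensions whose coarse $w$-averages cancel to $0$ off the sweeping schedule even though the function is large on it, and it keeps the resulting constant offsets inside $H(T,E)$. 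For a nonzero combination $\sum a_m\phi_m$, choosing any index $m_0$ with $a_{m_0}\ne 0$, the finitely many terms with $m\ne m_0$ contribute $\approx 0$ in probability along $S_{m_0}$, so $\omega_n\big(\sum a_m\phi_m\big)\approx a_{m_0}\,\omega_n(\phi_{m_0})$ still sweeps all of $L^0(\partial T,E)$ with positive lower density; thus every nonzero element of $\mathrm{span}(\phi_m)$ is frequently universal. The main obstacle throughout is the rigidity of \eqref{eq1}, and specifically for this last part the need to control all of the uncountably many linear combinations simultaneously, which is precisely what the disjoint cancelling schedules are designed to handle.
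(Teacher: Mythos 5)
Your treatment of the third assertion contains a genuine error. You define $R_k$ as the set of $f$ admitting a run of at least $k$ consecutive levels $n$ with $\oo_n(f)\notin\overline{V_1}$ and conclude that for $f\in R=\bigcap_k R_k$ the set $\{n:\oo_n(f)\in V_1\}$ ``has gaps of every length and hence lower density $0$''. That implication is false: a subset of $\N$ with arbitrarily long gaps can have lower density $1$ (take $\N\setminus\bigcup_k[2^k,2^k+k]$). In fact $R$ is not disjoint from $U_{FM}(T,E)$: starting from the construction proving density of $U_{FM}(T,E)$, one can insert at positions growing much faster than their lengths runs on which $\oo_n(f)$ stays at a fixed point outside $\overline{V_1}$; these runs have density $0$, so they disturb none of the positive lower densities, and the resulting $f$ lies in $R\cap U_{FM}(T,E)$. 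What is needed is a condition on the \emph{proportion} of the excursion, not its length. The paper's set $X(T,E)$ consists of those $f$ for which every non-empty open $V$ has $\{n:\oo_n(f)\in V\}$ of upper density $1$; taking $V$ disjoint from $V_1$, the complement of that set contains $\{n:\oo_n(f)\in V_1\}$ and has lower density $0$, whence $X(T,E)\cap U_{FM}(T,E)=\emptyset$, while $X(T,E)$ is still $G_\de$ (the conditions ``there exists $N$ with $|\{n\le N:\oo_n(f)\in V\}|>N(1-\frac1k)$'' are open) and dense by exactly the long-constant-run construction you describe, which is where $\sum_{y\in S(x)}w(x,y)=1$ enters. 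Your $R_k$ can be repaired by requiring the run to be long relative to its starting position, which amounts to the same thing.

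The other two assertions are essentially in order. Your density argument is the paper's: a dense sequence of finitely determined targets, disjoint scheduling sets of positive lower density (the paper uses the explicit $r_k$ and $\el(k)$ from Lemma 3.4 of \cite{7}), and the extension lemma that dumps the harmonicity constraint into a thin descendant sector; note this part needs no hypothesis on $\sum_y w(x,y)$. For algebraic genericity your route (disjoint sweeping schedules $S_m$ with $\oo_n(\phi_m)\to 0$ along $\bigcup_{m'\ne m}S_{m'}$) differs from the paper's, which passes to $E^{\N}$: one builds a dense sequence $(f_j)$ in $H(T,E)$ with $(f_0,f_1,\dots)\in U_{FM}(T,E^{\N})$ and deduces that every nonzero finite combination is frequently universal by targeting suitable product open sets. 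The two are close in spirit, but the $E^{\N}$ formulation packages the simultaneous control of all components into a single frequent-universality statement, whereas your version must still verify that the vanishing requirement on the dense set $\bigcup_{m'\ne m}S_{m'}$ leaves enough room between prescribed levels to reach deep targets --- this is precisely where constant harmonic extensions, hence the hypothesis $\sum_{y\in S(x)}w(x,y)=1$, are really used.
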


If the space $E$ is complete, then, Theorem \ref{three} says that $U_{FM}(T,E)$ is dense and meager in $H(T,E)$.
 
Results of similar flavor were proven in \cite{7}, where $E$ is a separable Fr\'{e}chet space over the field $\C$ or $\R$ and $w(x,y)=q(x,y)$. In this case, the space $H(T,E)$ is a closed subset of the complete metric space $E^T$; thus it is itself a complete metric space.
As a result, the proof of Theorem \ref{one} in this case is based on Baire's Category Theorem. Also, the set $U_{FM}(T,E)$ of frequently universal functions is proven to be dense and meager in $H(T,E)$. Furthermore the sets $U(T,E)$ and $U_{FM}(T,E)$ contain a vector space, except zero, which is dense in $H(T,E)$; that is, we have algebraic genericity.

Initially, we had a study of potential theory on trees \cite{12}, where the definition of harmonic functions was using convex combinations of the values of the function at the children of a node. Here, we deal with generalized harmonic functions on trees, whose definition is using linear combinations of the values of the function at the children of a node.
Articles related to our investigations, except \cite{2,3,7} are \cite{4,5,6,8,9,10,12,13,14}. The results of the present paper contain extensions of results proven in \cite{2,3,7}.

The starting point of the present paper was when we wanted to study the simple model of universal ``harmonic'' functions taking values 0 or 1; that is, in $\mathbb{F}_2$. In the case of a binary tree, let $x_1$ and $x_2$ be the children of a node $x$. In this case, we cannot have that $w(x,x_1) + w(x,x_2)=1$, they belong to the
field $\mathbb{F}_2$. This led to the two of the three new elements in this paper, the third being  the lack of completeness which obliges us to avoid the use of Baire's Theorem.

\section{Preliminaries}\label{sec2}
\noindent

We consider a field $\mathbb{F}$ endowed with a Hausdorff topology, such that addition and the inverse function are continuous. Let also $E$ be a separable topological vector space over the field $\mathbb{F}$, whose topology is induced by a translation invariant metric $d$. We do not assume that $(E,d)$ is a complete metric space.

A function $h:\partial T\ra E$ is $M$--measurable (respectively $M_n$--measurable), if, for every open set $V\subset E$ we have $h^{-1}(V)\in M$ (respectively $M_n$). We identify two $M$--measurable functions on $\partial T$, if they are equal almost everywhere with respect to the probability measure $\bP$. Thus, we obtain the space $L^0(\partial T,E)$ of $M$--measurable measurable functions. Convergence in probability in this space is defined using the metric
\[
d_{\bP}(h,g)=\int_{\partial T}\frac{d(h(e),g(e))}{1+d(h(e),g(e))}d\bP(e),
\]
where $h,g\in L^0(\partial T,E)$ and $d$ is the metric on $E$.

Since, $E$ is separable and each $M_n$ is finite, the space of $M_n$--measurable functions defined on $\partial T$ with values in $E$ is separable as well. It follows that $L^0(\partial T,E)$ is also separable and, hence, it has a dense sequence $h_n\in L^0(\partial T,E)$, $n=0,1,2,\ld$, where each $h_n$ is $M_{\ti(n)}$--measurable for some $\ti(n)\in\{0,1,2,\ld\}$.
\begin{lemma}
If $E\neq\{0\}$, the space $L^0(\partial T,E)$ does not contain isolated points, independently of the fact that $\mathbb{F}$ may contain isolated points, as for example in the case $\mathbb{F}=\mathbb{F}_2$.
\end{lemma}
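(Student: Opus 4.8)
The plan is to prove directly that no point of $L^0(\partial T,E)$ is isolated, i.e. that for every $h\in L^0(\partial T,E)$ and every $\varepsilon>0$ I can produce some $g\in L^0(\partial T,E)$ which is distinct from $h$ in $L^0$ (that is, $\bP(\{e:h(e)\neq g(e)\})>0$) yet satisfies $d_{\bP}(h,g)<\varepsilon$. Since $E\neq\{0\}$, I fix once and for all a vector $v\in E$ with $v\neq 0$. The perturbation I intend to use is
\[
g=h+v\,\mathbf{1}_{B_x},
\]
namely $g(e)=h(e)+v$ for $e\in B_x$ and $g(e)=h(e)$ otherwise, where $B_x$ is a single boundary sector of small positive measure to be chosen below. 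So the whole argument reduces to adding a fixed nonzero vector to $h$ on a sector whose measure I can drive to $0$.

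First I would locate a sector of arbitrarily small positive measure. Because every vertex has at least two children, an induction gives $|T_n|\ge 2^n$. On the other hand, the sectors $\{B_x\}_{x\in T_n}$ partition $\partial T$, so $\sum_{x\in T_n}\bP(B_x)=1$, and each summand is strictly positive since $\bP(B_x)=p_n(B_x)=\prod_{j=0}^{n-1}q(y_j,y_{j+1})$ is a product of the positive weights $q$ along the unique path from $x_0$ to $x$. Hence the minimum of $\bP(B_x)$ over $x\in T_n$ is at most the average $1/|T_n|\le 2^{-n}$, and choosing $n$ large I can fix $x\in T_n$ with $0<\bP(B_x)<\varepsilon$.

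Next I would check the three required properties of $g$. For measurability, note $B_x\in M_n\subset M$, and for any open $V\subset E$ the set $g^{-1}(V)$ decomposes as $\bigl(h^{-1}(V-v)\cap B_x\bigr)\cup\bigl(h^{-1}(V)\cap(\partial T\setminus B_x)\bigr)$, both of which lie in $M$ because translation by $v$ is a homeomorphism of $E$; thus $g\in L^0(\partial T,E)$. For distinctness, on $B_x$ we have $g-h=v\neq 0$ while $\bP(B_x)>0$, so $h$ and $g$ are not identified in $L^0$. Finally, for the distance, translation invariance of $d$ gives $d(h(e),g(e))=d(0,v)$ for $e\in B_x$ and $0$ elsewhere, whence, using $t/(1+t)\le 1$,
\[
d_{\bP}(h,g)=\frac{d(0,v)}{1+d(0,v)}\,\bP(B_x)\le \bP(B_x)<\varepsilon .
\]
Therefore $h$ is a limit of points distinct from it, and $h$ is not isolated.

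I do not expect a genuine technical obstacle here; the argument is short. The one conceptual point to get right — and precisely what the statement flags by invoking $\mathbb{F}_2$ — is that I must \emph{not} try to make the perturbation small by rescaling $v$ with a small scalar, since $\mathbb{F}$ may be discrete and possess no small nonzero elements. The argument avoids this entirely: the amplitude $v$ stays fixed and the smallness of $d_{\bP}(h,g)$ is extracted solely from the smallness of $\bP(B_x)$, the measure of the support of the perturbation. This is exactly why isolated points of $\mathbb{F}$ are irrelevant to the absence of isolated points in $L^0(\partial T,E)$.
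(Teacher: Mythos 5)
Your proof is correct and follows essentially the same strategy as the paper: perturb $h$ by a fixed nonzero vector $v$ on a boundary sector $B_x$ of arbitrarily small positive measure, and bound $d_{\bP}(h,g)$ by $\bP(B_x)$ since the integrand $t/(1+t)$ is at most $1$. The only cosmetic differences are that the paper locates the small sector by iteratively choosing a child with $q(x,y)\le \tfrac12$ rather than by your averaging argument over $T_n$, and it defines $g$ on $B_x$ by swapping between $0$ and $v$ instead of translating by $v$; both variants achieve the same end.
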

\begin{proof}
Let $h\in L^0(\partial T,E)$ and $\e>0$. Since for every $x\in T$ we have $\vert S(x) \vert \ge2$ and $q(x,y)>0$ for every $y\in S(x)$, there exists $y\in S(x)$ with $0<q(x,y)\le\dfrac{1}{2}$. It follows that there exists $n\in\{0,1,2,\ld\}$ and $x\in T_n\subset T$, such that $0<p(B_x)\le\dfrac{1}{2^n}<\e$. We consider the $M$--measurable function $g:\partial T\ra E$ defined as follows: $g(e)=h(e)$ if $e\in\partial T\sm B_x$ and for $e\in B_x$ $g(e)=0$ if $h(e)\neq0$ and $g(e)=v$ if $h(e)=0$, where $v\in E-\{0\}$ is arbitrary but fixed. One easily checks that $d_\bP(h,g)<\e$ and that the functions $h$ and $g$ are not $\bP$-almost everywhere equal, because they are different on $B_x$ and $\bP(B_x)>0$. Thus, $L^0(\partial T,E)$ does not contain isolated points.
\end{proof}
For $x\in T$ and $y\in S(x)$ we fix elements $w(x,y)\in  \mathbb{F} \setminus \{0\}$.
We consider the space $E^T=\{f:T\ra E\}$ endowed with the metric
\[
\rho(f,g)=\sum^\infty_{n=0}\frac{1}{2^n}\cdot\frac{d(f(x_n),g(x_n))}{1+d(f(x_n),g(x_n))},
\]
where $x_n$, $n=0,1,2,\ld$ is a fixed enumeration of $T$. The metric $\rho$ induces the cartesian topology on $E^T$.

In this paper, we are interested in functions $f\in E^T$, such that the sequence $\oo_n(f)$, $n=0,1,2,\ld$ is dense in $L^0(\partial T,E)$ or such that, for every non-void open set $V\subset L^0(\partial T,E)$ the set $\{n\in\N:\oo_n(f)\in V\}$ has strictly positive lower density. More precisely, we will restrict our attention to more particular subclasses of functions in $E^T$, related to the quantities $w(w,y)\in \mathbb{F}\setminus \{0\}$, $y\in S(x)$, $x\in T$. These functions are called (generalized) harmonic functions on $T$ and will be defined in the next section.
\section{Generalized harmonic functions and universality}\label{sec3}
\noindent

This section is dedicated to the proof of Theorem \ref{one}. Let $w(x,y) \in \mathbb{F}\setminus \{ 0\}$. We start with the following definition.
\begin{Def}\label{def3.1}
A function $f:T\ra E$ is called a generalized harmonic function, if for every $x\in T$ it holds $f(x)=\ssum_{y\in S(x)}w(x,y)f(y)$. The set of such functions is denoted by $H(T,E)$ and is a closed subset of $E^T$ endowed with the cartesian topology induced by the metric
\[
\rho(f,g)=\sum^\infty_{n=0}\frac{1}{2^n}\frac{d(f(x_n),g(x_n))}{1+d(f(x_n),g(x_n))},
\]
where $d$ is the metric in $E$ and $x_n$, $n=0,1,2,\ld$ is a fixed enumeration of $T$.
\end{Def}

We also remind that for $f\in E^T$ the functions $\oo_n(f):\partial T\ra E$ have been defined in Definition \ref{def2.1}.
\begin{Def}\label{def3.2}
Let $\ti\subset\{0,1,2,\ld\}$ be an infinite set. Then a function $f\in H(T,E)$ belongs to the class $U_\ti(T,E)$, if the sequence $\oo_n(f)$, $n\in\ti$ is dense in $L^0(\partial T,E)$ with respect to the topology of convergence in probability induced by the metric
\[
d_\bP(h,g)=\int_{\partial T}\frac{d(h(e),g(e))}{1+d(h(e),g(e))}d\bP(e),
\]
where $d$ is the metric in $E$. Equivalently, $f\in U_\ti(E,T)$ iff, for each $h\in L^0(\partial T,E)$ there exists a sequence $\la_n\in\ti$, $n=1,2,\ld$, such that, $\oo_{\la_n}(f)\ra h$ in probability, as $n\ra +\infty$.
\end{Def}
\begin{rem}\label{rem3.3}
In Definition \ref{def3.2} it is equivalent to require $\la_n<\la_{n+1}$ for all $n=1,2,\ld$\;. If $E=\{0\}$, then $L^0(\partial T,E)$ is a singleton and the result follows. If $E\neq\{0\}$, then, we have already seen that $L^0(\partial T,E)$ does not contain isolated points which implies easily that the sequence $\la_n$, $n=1,2,\ld$ can be chosen strictly increasing. Definition 3.2 is analogous to Menchoff's universality: if a
sequence converges almost everywhere, then it must converge in
probability as well and reversely, if a sequence converges in probability, then it has a 
subsequence that converges almost everywhere.
\end{rem}

\begin{thm}\label{thm3.4}
Let $\ti\in\{0,1,2,\ld\}$ be an infinite set. Then, under the above assumptions and notation the set $U_\ti(T,E)$ is a dense and $G_\de$ subset of $H(T,E)$.
\end{thm}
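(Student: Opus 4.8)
The plan is to prove that $U_\ti(T,E)$ is a dense $G_\de$ subset of $H(T,E)$ without invoking Baire's theorem, since $E$ (and hence $H(T,E)$) need not be complete. First I would exploit separability: fix a dense sequence $(h_k)_{k\ge 0}$ in $L^0(\partial T,E)$, where each $h_k$ is $M_{\ti(k)}$-measurable, as guaranteed in the Preliminaries. I would then rewrite the universality condition in terms of this sequence. For integers $k,m\ge 1$, set
\[
U_{k,m}=\Bigl\{f\in H(T,E):\ \exists\, n\in\ti\ \text{with}\ d_\bP(\oo_n(f),h_k)<\tfrac1m\Bigr\}.
\]
A standard density argument shows $f\in U_\ti(T,E)$ iff for every $k$ and every $m$ one can approximate $h_k$ within $1/m$ by some $\oo_n(f)$ with $n\in\ti$, so that $U_\ti(T,E)=\bcap_{k,m}U_{k,m}$. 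Each $U_{k,m}$ is open in $H(T,E)$: the map $f\mapsto\oo_n(f)$ is continuous from $E^T$ (cartesian topology) into $L^0(\partial T,E)$ because $\oo_n(f)$ depends only on the finitely many values of $f$ on $T_n$, and $U_{k,m}$ is a union over $n\in\ti$ of preimages of open balls. Hence $U_\ti(T,E)$ is $G_\de$.

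The heart of the matter is density, and here is precisely where the absence of completeness forces a new argument: rather than concluding density of the $G_\de$ set from Baire, I must prove density directly and constructively. The key step is the following approximation claim: given any $g\in H(T,E)$, any $\e>0$, any target $h\in L^0(\partial T,E)$ that is $M_N$-measurable for some $N$, and any index $n_0\in\ti$, I can produce $f\in H(T,E)$ with $\rho(f,g)<\e$ and with $\oo_n(f)$ as close as desired to $h$ in $d_\bP$ for some $n\in\ti$, $n\ge n_0$. The construction is explicit: choose $n\in\ti$ large (so that $1/2^n$ is small, keeping $\rho(f,g)$ controlled, and so that $n\ge N$), prescribe the values of $f$ on level $T_n$ to equal the constant values that $h$ takes on the corresponding boundary sectors $B_x$ (this is possible since $h$ is $M_N$-measurable and $n\ge N$, so $h$ is constant on each $B_x$, $x\in T_n$), and then extend $f$ downward and upward so as to remain generalized harmonic.

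The extension is the delicate point and the main obstacle, so I would treat it carefully. Going up from level $n$ toward the root is automatic: equation (\ref{eq1}) determines $f(x)$ for $x\in T_j$, $j<n$, as the finite linear combination $\ssum_{y\in S(x)}w(x,y)f(y)$ of already-defined values, and by taking $n$ large these altered values live on high-index vertices $x_m$, whose contribution $2^{-m}$ to $\rho$ is negligible, which is exactly how $\rho(f,g)<\e$ is secured. Going down from level $n$ is the real issue: for each $x\in T_n$ with prescribed value $f(x)=v_x$, I must choose values $f(y)$, $y\in S(x)$, satisfying $v_x=\ssum_{y\in S(x)}w(x,y)f(y)$ with all $w(x,y)\in\mathbb{F}\setminus\{0\}$; since $|S(x)|\ge 2$, this single linear equation in at least two unknowns is solvable in $E$ (e.g.\ put $f(y)=0$ for all but one child and solve the remaining one-term equation $w(x,y_*)f(y_*)=v_x$ using the fixed nonzero scalar $w(x,y_*)$, which is legitimate because $\mathbb{F}$ is a field so $w(x,y_*)$ is invertible), and one continues this choice at every deeper level recursively. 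This underdetermined downward freedom is exactly what makes $H(T,E)$ rich enough to be dense, and it is where the generalized (linear, not convex) coefficients and the two-children hypothesis are used. Having established the approximation claim for the dense $M_N$-measurable targets $h_k$, density of each finite intersection, and hence of $U_\ti(T,E)$, follows by iterating the claim; combined with the $G_\de$ property this completes the proof. \qb
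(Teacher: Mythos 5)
Your $G_\de$ part is fine and coincides with the paper's: the sets $U_{k,m}$ are exactly the paper's $\bigcup_{n\in\ti}E(n,k,m)$, each open by continuity of $f\mapsto\oo_n(f)$, and the countable intersection identity is the standard one. The gap is in the density argument, at the step you yourself flag as delicate. You propose to prescribe the values of $f$ on the whole level $T_n$ to match the target $h$ and then propagate \emph{upward} to the root via equation (\ref{eq1}), claiming that ``these altered values live on high-index vertices $x_m$, whose contribution $2^{-m}$ to $\rho$ is negligible.'' That is backwards: propagating upward recomputes $f$ on \emph{every} level $j<n$, including $T_0=\{x_0\}$, and the root and the other low levels are precisely the vertices with small enumeration index and hence non-negligible weight in $\rho$. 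There is no reason the recomputed value $f(x_0)=\ssum_{y\in S(x_0)}w(x_0,y)f(y)$ is close to $g(x_0)$, so $\rho(f,g)<\e$ is not secured. In short: harmonicity makes the values on $\bigcup_{j=0}^{N}T_j$ a function of the values on $T_n$, so you cannot simultaneously prescribe all of $T_n$ freely and keep the initial part of $f$ close to $g$.

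The paper's Lemma \ref{lem3.5} is exactly the device that resolves this tension, and it is the missing idea in your proposal. One keeps $f$ \emph{equal} to $g$ on $\bigcup_{n=0}^{N}T_n$ (so $\rho(f,g)$ is automatically small), reserves for each $x\in T_N$ a single descendant $\bi(x)\in T_K$ lying on a geodesic through $x$, prescribes the target values $h$ only on $T_K\sm\{\bi(x):x\in T_N\}$, and then solves the linear equations (\ref{eq1}) level by level for the values at the reserved vertices --- this is where the invertibility of the nonzero $w(x,y)$ in the field $\mathbb{F}$ is actually used, rather than in your ``set all but one child to zero'' step. The price is that $\oo_K(f)$ agrees with $h$ only off $\bigcup_{x\in T_N}B_{\bi(x)}$; but choosing each step of the reserved geodesics through a child with $q(x,y)\le\frac12$ and taking $K$ large makes $\bP\big(\bigcup_{x\in T_N}B_{\bi(x)}\big)<\e$, hence $d_\bP(\oo_K(f),h)$ small. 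Iterating over the dense sequence $h_1,h_2,\ld$ then yields $f\in U_\ti(T,E)$ agreeing with $g$ on the first $N$ levels. Your construction would need to be replaced by this (or an equivalent) reserved-vertex scheme to close the density argument.
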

\noindent
{\bf Proof of Theorem \ref{thm3.4}}. Let $h_j$, $j=1,2,\ld$ be a dense sequence in $L^0(\partial T,E)$, where each $h_j$ is $M_{\ti(j)}$-measurable for some $\ti(j)\in\{0,1,2,\ld\}$. For $n\in\{0,1,2,\ld\}$, $j,s=1,2,\ld$ we consider the set
\[
E(n,j,s)=\bigg\{f\in H(T,E):d_\bP(\oo_n(f),h_j)<\frac{1}{s}\bigg\},
\]
which can be easily seen to be open in $H(T,E)$. One can also check that
\[
U_\ti(T,E)=\bigcap^\infty_{j=1}\bigcap^\infty_{s=1}\bigcup_{n\in\ti}E(n,j,s).
\]
It follows that $U_{\tau} (T,E)$ is $G_{\delta}$ subset of $H(I,E)$. In order to prove that $U_\ti(T,E)$ is dense in $H(T,E)$ we need the following.
\begin{lem}\label{lem3.5}
Let $f:\dis\bigcup^N_{n=0}T_n\ra E$ be a function satisfying
\[
f(x)=\sum_{y\in S(x)}w(x,y)f(y) \ \ \text{for all} \ \  x\in\bigcup^{N-1}_{n=0}T_n.
\]
Let $K>N$. For every $x\in T_N$ let $\bi(x)$ be an element of $T_K$, such that, there exists $e=e_x\in\partial T$ with $\{x,\bi(x)\}\subset e$. Suppose that a function $g:T_K\sm\{\bi(x):x\in T_N\}\ra E$ is given. Then, there exists a function $F:\dis\bigcup^K_{n=0}T_n\ra E$ such that $F \big \vert_{\dis\bigcup^N_{n=0}T_n}=f$, $f(y)=g(y)$ for all $y\in T_K-\{\bi(x):x\in T_N\}$ and
\[
F(z)=\sum_{\el\in S(z)}w(z,\el)F(\el) \ \ \text{for all} \ \ z\in\bigcup^{K-1}_{n=0}T_n.
\]
\end{lem}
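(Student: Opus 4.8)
The idea is that a generalized harmonic function is completely determined by its values on the bottom level $T_K$, since the identity $F(z)=\sum_{\el\in S(z)}w(z,\el)F(\el)$ expresses each value in terms of the values at the children. So my plan is to pin down all of $T_K$ first and then recover the intermediate levels by propagating the identity upward. I would put $F=f$ on $\dis\bigcup_{n=0}^N T_n$ and $F=g$ on $T_K\sm\{\bi(x):x\in T_N\}$. Because the children-sets partition each level, the level-$K$ descendants of distinct vertices of $T_N$ are disjoint; hence the $\bi(x)$ are pairwise distinct and, below a fixed $x\in T_N$, the vertex $\bi(x)$ is the \emph{only} level-$K$ descendant not already assigned a value by $g$. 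The values still to be chosen are therefore exactly the $F(\bi(x))$, $x\in T_N$.

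Iterating the harmonic identity from level $N$ down to level $K$ shows that, for $x\in T_N$, the requirement $F(x)=f(x)$ is equivalent to $f(x)=\sum_{v}\big(\prod_{\text{path }x\to v}w\big)F(v)$, the sum running over the level-$K$ descendants $v$ of $x$. Separating off $\bi(x)$, this reads $f(x)=c_x\,F(\bi(x))+R_x$, where $R_x=\sum_{v\neq\bi(x)}\big(\prod w\big)g(v)$ is already determined by $g$, and $c_x=\prod_{j=N}^{K-1}w(y_j,y_{j+1})$ is the product of the weights along the geodesic $x=y_N,y_{N+1},\ld,y_K=\bi(x)$. Here is the crux: since each $w(x,y)\in\mathbb{F}\sm\{0\}$ and $\mathbb{F}$ is a field, $c_x$ is a nonzero element of $\mathbb{F}$ and hence invertible, so I may \emph{define} $F(\bi(x))=c_x^{-1}\big(f(x)-R_x\big)$ for every $x\in T_N$. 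This fixes $F$ on all of $T_K$.

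I would then fill in the intermediate levels by the upward recursion $F(z)=\sum_{\el\in S(z)}w(z,\el)F(\el)$, applied successively to $z\in T_{K-1},T_{K-2},\ld,T_{N+1}$. By construction the harmonic identity holds at every vertex of levels $N+1,\ld,K-1$, and $F$ agrees with $g$ off the $\bi(x)$'s. For $x\in T_N$, unfolding this recursion reproduces precisely the downward expansion above, which equals $c_x F(\bi(x))+R_x=f(x)$ by the choice of $F(\bi(x))$; so the identity holds at level $N$ too. For a vertex at level $n\le N-1$ it holds because $F=f$ there and on level $n+1\le N$, and $f$ was assumed harmonic on $\dis\bigcup_{n=0}^{N-1}T_n$. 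Thus $F$ satisfies $F|_{\bigcup_{n=0}^N T_n}=f$, $F=g$ off the $\bi(x)$'s, and the harmonic identity on $\dis\bigcup_{n=0}^{K-1}T_n$, as required.

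The only genuine obstacle is the invertibility of $c_x$, and this is exactly where the field hypothesis on $\mathbb{F}$ and the assumption $w(x,y)\neq0$ are used; it takes the place of the convexity or normalization available in the classical case $w(x,y)=q(x,y)$. Everything else is bookkeeping: the consistency of the two-stage construction is automatic, since the equation solved for $F(\bi(x))$ is nothing but the level-$N$ harmonic constraint written out in full.
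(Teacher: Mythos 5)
Your proof is correct and rests on the same key observation as the paper's: for each $x\in T_N$ the only undetermined level-$K$ value below $x$ is $F(\beta(x))$, and the level-$N$ harmonic constraint can be solved for it because the relevant weights are nonzero, hence invertible, elements of the field $\mathbb{F}$. The paper packages this as an induction on $K-N$, solving one single-weight linear equation per level, whereas you telescope the whole chain into the single equation $f(x)=c_x F(\beta(x))+R_x$ and invert the product $c_x$ in one step; these are the same argument in different presentations.
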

\noindent
{\bf Proof of Lemma \ref{lem3.5}} Let $k=K-N\in\{1,2,\ld\}$. We will do an induction on $k$.

For $k=1$, $K=N+1$. Then in the equation $F(x)=\ssum_{y\in S(x)}w(x,y)F(y)$, $x\in T_N$ everything is known except $F(y)$ for $y=\bi(x)$. Since $w(x,\bi(x))\neq0$ in the field $\mathbb{F}$, we can solve this equation for $F(\bi(x))$. Thus, the extension $F$ is obvious and unique.
Suppose we have Lemma \ref{lem3.5} for some value of $k$. We will show it for $k+1$.

Consider the levels $T_N$, $T_{N+k}$, $T_{N+k+1}$. For $x\in T_N$ we have $e_x\in\partial T$ and $\bi(x)\in e_x\cap T_{N+k+1}$ with $x\in e_x$. Let $z=z(x)\in e_x\cap T_{N+k}$. Using the function $g$ and equation (\ref{eq1}) we can compute $\widetilde{g}(y)=F(y)$ for all $y\in T_{N+k}-\{z(x):x\in T_N\}$. By the induction hypothesis, there is a function $F:\dis\bigcup^{N+k}_{n=0}T_n\ra E$ satisfying (\ref{eq1}) for $x\in\dis\bigcup^{N+k-1}_{n=0}T_n$ and extending $f$ and $\tg$.

Now, we know the values $F(z(x))$, for all $x\in T_N$. From the equation $F(z(x))=\ssum_{\el\in S(z(x))}w(z(x),\el)F(\el)$ we can compute $F(\bi(x))$ as well.

Furthermore, for $z\in T_{N+k}-\{z(x):x\in T_N\}$ by construction the value $F(z)=\tg(z)$ is such that
\[
F(z)=\sum_{y\in S(z)}w(z,y)g(y)=\sum_{y\in S(z)}w(x,y)F(y).
\]
The proof of Lemma \ref{lem3.5} is completed. We continue the proof of Theorem \ref{thm3.4}. It remains to show that $U_\ti(T,E)$ is dense in $H(T,E)$. It suffices to consider $\f\in H(T,E)$ and a natural number $N$ and prove that there is $f\in U_\ti(T,E)$, such that
\[
f\Big|_{\bigcup\limits^N_{n=0}T_n}=\f\Big|_{\bigcup\limits^N_{n=0}T_n}.
\]
We remind that we have a sequence $h_j$, $j=0,1,2,\ld$ dense in $L^0(\partial T,E)$, where each $h_j$ is $M_n$-measurable for some $n=n(j)$. Then $h_j$ is $M_m$-measurable for every $m\ge n$.

Since, for every $x\in T$, there exists $y\in S(x)$ with $0<q(x,y)\le\dfrac{1}{2}$, if $n\in N$ is given, we can find $\widetilde{n}>n$, $\widetilde{n}\in\ti$ and such that $h_j$ is $M_{\widetilde{n}}$-measurable and $\bP\Big(\dis\bigcup_{x\in T_n}B(w(x))\Big)<\e$, whatever $\e>0$ and $j$ are given, where $w(x)\in T_{\widetilde{n}}$, $x\in T_n$ are as in Lemma \ref{lem3.5}.

For $\e=1$ we apply this for $n=N$. For $z\in T_{\widetilde{n}}-\{w(x);x\in T_N\}$ we set $F(z)=h_1(z)$ and
\[
F\Big|_{\bigcup\limits^N_{n=0}T_n}=\f\Big|_{\bigcup\limits^N_{n=0}T_n}.
\]
Using Lemma \ref{lem3.5}, the function $F$ has an extension on $\dis\bigcup^{\widetilde{n}}_{n=0}T_n$ satisfying (\ref{eq1}) for $x\in\dis\bigcup^{\widetilde{n}-1}_{n=0}T_n$. Obviously, $\oo_{\tn}(F)$ is $1$ close to $h_1$ with respect to the metric $d_{\bP}$.

After that we consider $F\Big|_{\bigcup\limits^{\widetilde{n}}_{n=0}T_n}$ and we extend it to $\dis\bigcup^m_{n=0}T_n$, so that (\ref{eq1}) is satisfying for $x\in\dis\bigcup^{m-1}_{n=0}T_n$, and the extension is $\e=\dfrac{1}{2}$ close to $h_2$ with respect to $d_\bP$ and $m\in\ti$, $m>\tn$ and $h_2$ is $M_m$-measurable.

Continuing in this way with $h_3,h_4,\ld$ we construct $F\in H(T,E)$ and a sequence $k_n\in\ti$, such that $d_\bP(\oo_{k_n}(F),h_n)<\dfrac{1}{n}$. It follows that $F\in U_\ti(T,E)$ and $F$ coincides with $\f$ on an arbitrary large part $\dis\bigcup^N_{n=0}T_n$ of the tree $T$. Since $\f$ is arbitrary in $H(T,E)$, it follows that $U_\ti(T,E)$ is dense in $H(T,E)$. The proof of Theorem \ref{thm3.4} is\linebreak completed. \qb
\vspace*{0.2cm}

Next we show that we have algebraic genericity.
\begin{thm}\label{thm3.6}
Let $\ti\subset\{0,1,2,\ld\}$ be an infinite set. Under the above assumptions and notation the set $U_\ti(T,E)\cup\{0\}$ contains a vector space dense in $H(T,E)$.
\end{thm}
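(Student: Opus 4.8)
The plan is to exhibit an explicit dense $\mathbb{F}$-linear subspace $V$ of $H(T,E)$ all of whose nonzero elements are universal. We may assume $E\neq\{0\}$, since otherwise $L^0(\partial T,E)$ is a singleton and the statement is trivial. Recall that $H(T,E)$ is a closed subspace of the separable metric space $(E^T,\rho)$, hence itself separable; fix a dense sequence $(g_m)_{m\ge1}$ in $H(T,E)$ and a dense sequence $(h_j)_{j\ge1}$ in $L^0(\partial T,E)$, with each $h_j$ being $M_{n(j)}$-measurable for some $n(j)$. Partition $\ti$ into countably many pairwise disjoint infinite sets $A_1,A_2,\ld$\,. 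The goal is to build functions $u_m\in H(T,E)$, $m\ge1$, such that: (a) after re-indexing over all pairs $(g_l,N)$, the family $(u_m)$ is dense in $H(T,E)$; (b) the sequence $(\oo_n(u_m))_{n\in A_m}$ is dense in $L^0(\partial T,E)$; and (c) $\oo_n(u_m)\to0$ in probability as $n\to+\infty$ along $A_M$, for every $M>m$. Since these three requirements refer to $u_m$ alone, the $u_m$ can be constructed independently of one another, and $V:=\operatorname{span}_{\mathbb{F}}\{u_m:m\ge1\}$ will be the required space.

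The construction of a single $u_m$ is a refinement of the proof of Theorem \ref{thm3.4}, performed by a level-by-level extension based on Lemma \ref{lem3.5}. First I would fix $u_m$ on an initial block $\bigcup_{n=0}^{N}T_n$ to agree with the prescribed $g_l$, which by the definition of $\rho$ forces $\rho(u_m,g_l)$ to be as small as we like once $N$ is large; ranging over all pairs $(g_l,N)$ yields (a). From level $N$ upward I would process the levels in increasing order and repeatedly apply Lemma \ref{lem3.5}: at a level $n\in A_m$ I steer $\oo_n(u_m)$ to approximate the next target in a fixed enumeration that runs through every $h_j$ infinitely often, with error tending to $0$; at a level $n\in A_M$ with $M>m$ I steer $\oo_n(u_m)$ to approximate $0$, again with error $\to0$; at the remaining levels I extend arbitrarily. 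As in Theorem \ref{thm3.4}, Lemma \ref{lem3.5} lets us prescribe $\oo_n(u_m)$ freely except on the union of boundary sectors attached to the designated descendants $\bi(x)$, and by taking the levels sufficiently far apart this exceptional set can be given arbitrarily small $\bP$-measure; this is what makes both the ``approximate $h_j$'' and the ``approximate $0$'' steps achievable in $d_\bP$. Carrying this out gives (b) and (c).

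Granting the $u_m$, I would prove $V\sm\{0\}\subset U_\ti(T,E)$ by a domination argument. Take a nonzero $f=\sum_{i=1}^{k}\la_i u_{m_i}$ with $m_1<\cdots<m_k$ and all $\la_i\neq0$, and set $M=m_k$. Along $A_M$ we have $\oo_n(f)=\la_k\,\oo_n(u_M)+\sum_{i<k}\la_i\,\oo_n(u_{m_i})$, and by (c) each lower term tends to $0$ in probability. Here I would use the elementary fact that, for a fixed $\la\in\mathbb{F}$, the map $v\mapsto\la v$ is continuous on $E$ and fixes $0$, so it sends sequences converging to $0$ in probability to sequences converging to $0$ in probability; hence $\sum_{i<k}\la_i\oo_n(u_{m_i})\to0$ along $A_M$. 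Given a target $h\in L^0(\partial T,E)$ and $\e>0$, by (b), together with the fact that $L^0(\partial T,E)$ has no isolated points (so each neighbourhood is visited at arbitrarily large indices), there are arbitrarily large $n\in A_M$ with $\oo_n(u_M)$ close to $\la_k^{-1}h$; for such $n$, translation invariance of $d_\bP$ and the continuity of multiplication by $\la_k$ give that $\la_k\oo_n(u_M)$ is close to $h$, while the lower terms are negligible. Thus $\oo_n(f)$ approximates $h$ for some $n\in A_M\subset\ti$, and therefore $f\in U_\ti(T,E)$.

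Finally, $V$ is an $\mathbb{F}$-vector subspace of $H(T,E)$ because $H(T,E)$ is closed under the $\mathbb{F}$-linear operations, it is dense by (a), and $V\sm\{0\}\subset U_\ti(T,E)$ by the previous paragraph; hence $U_\ti(T,E)\cup\{0\}\supset V$, as required. I expect the main obstacle to be the bookkeeping in the construction of the $u_m$: one must interleave infinitely many ``vanishing'' requirements, one for each later block $A_M$, with the ``universal'' requirement on $A_m$ inside a single harmonic function, keeping all the exceptional $\bP$-measures controllably small, which is exactly where the flexibility of Lemma \ref{lem3.5} is essential. A secondary point worth isolating, and the reason the argument works over an arbitrary field $\mathbb{F}$ (including finite fields such as $\mathbb{F}_2$), is that the domination step never enumerates the coefficients $\la_i$; it only uses that multiplication by a fixed nonzero scalar is a homeomorphism of $L^0(\partial T,E)$.
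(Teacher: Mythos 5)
Your overall architecture is sound and your third paragraph (the domination argument for a linear combination $\sum_i\la_i u_{m_i}$, using that multiplication by a fixed nonzero scalar is a homeomorphism) is correct and is essentially the same computation the paper performs. The gap is in the construction of the $u_m$, specifically in requirement (c). Conditions (a) and (b) are of the type ``for infinitely many $n$'', so you may indeed space out the levels you control and invoke Lemma \ref{lem3.5} with large gaps to shrink the exceptional set $\bigcup_{x}B_{\bi(x)}$. But (c) is a cofinality requirement: for \emph{every} sufficiently large $n\in A_M$ ($M>m$) you need $d_\bP(\oo_n(u_m),0)$ small, and these levels are prescribed by the partition of $\ti$ --- you cannot thin them out. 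The device ``take the levels sufficiently far apart'' is therefore unavailable exactly where you invoke it. Concretely, right after an excursion level $n_0\in A_m$ at which $\oo_{n_0}(u_m)$ approximates a nonzero target $h_j$, harmonicity forces at least one child of each vertex $x\in T_{n_0}$ with $u_m(x)$ bounded away from $0$ to carry a value bounded away from $0$; on a binary tree with $q\equiv 1/2$ and $h_j$ a nonzero constant this pins $d_\bP(\oo_{n_0+1}(u_m),0)$ above a fixed positive constant. Since there are infinitely many excursions, if infinitely many of the levels $n_0+1$ land in a single block $A_{M_0}$ with $M_0>m$, then (c) fails for $A_{M_0}$. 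Repairing this requires an extra idea you do not supply: for instance, choosing the partition so that each $A_m$ contains arbitrarily long runs of consecutive elements of $\ti$ and scheduling each excursion at the start of such a run, so that the geometric collapse back to $0$ (exceptional measure $\le 2^{-k}$ after $k$ levels) is completed before any level of a later block is reached.

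It is worth contrasting this with the paper's proof, which sidesteps the simultaneity problem entirely by extracting the ``vanishing'' sets \emph{a posteriori} rather than fixing a partition in advance. One picks $f_1\in U_\ti(T,E)$ close to $h_1$; since $0\in L^0(\partial T,E)$ and $f_1$ is universal along $\ti$, there is some infinite $\ti_1\subset\ti$ with $\lim_{n\in\ti_1}\oo_n(f_1)=0$. One then chooses $f_2\in U_{\ti_1}(T,E)$ (dense by Theorem \ref{thm3.4} applied to the infinite set $\ti_1$), extracts $\ti_2\subset\ti_1$ along which $\oo_n(f_2)\to 0$, and so on; the nesting $\ti\supset\ti_1\supset\ti_2\supset\cdots$ makes all lower-order terms vanish automatically along the subsequence used for the top term. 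This is why Theorem \ref{thm3.4} is stated for an arbitrary infinite $\ti$. If you either adopt that nested-subsequence scheme or add the run-length coordination described above, your argument goes through.
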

\noindent
{\bf Proof of Theorem \ref{thm3.6}}. Let $h_j$, $j=1,2,\ld$ be a sequence dense in $H(T,E)$. Since $U_\ti(T,E)$ is dense in $H(T,E)$, there exists $f_1\in U_\ti(T,E)$ so that $\rho(f_1,h_1)<1$. Let $\ti_1\subset\ti$ infinite set such that $\dis\lim_{n\in\ti_1}\oo_n(f_1)=0$ in $L^0(\partial T,E)$. Inductively, we assume that we have infinite sets $\ti\supset\ti_1\supset\cdots\supset\ti_{N-1}$ and functions $f_k\in U_{\ti_{k-1}}(T,E)$ with $\rho(f_k,h_k)<\dfrac{1}{k}$ for $k=1,2,\ld,N$. Let $\ti_N\subset \ti_{N-1}$ be an infinite set, such that $\dis\lim_{n\in\ti_N}\oo_n(f)=0$ in $L^0(\partial T,E)$. Let $f_{N+1}\in U_{\ti_N}(T,E)$ such that $\rho(f_{N+1},h_{N+1})<\dfrac{1}{N+1}$.

Therefore, we have defined a sequence $f_j$, $j=1,2,\ld$. We will show that the linear span $A=\lan f_j,\; j=1,2,\ld\ran$ is dense in $H(T,E)$ and that $A\subset U_\ti(T,E)\cup\{0\}$. Since the sequence $h_k$ is dense in $H(T,E)$, which does not contain isolated points and $\dis\lim_k\rho(f_k,h_k)=0$ it follows that the sequence $\Big\{f_k\Big\}^\infty_{k=1}\subset A$ is dense in $H(T,E)$.

Let $L=a_1f_1+\cdots+a_mf_m$ with $a_k\in \mathbb{F}$, $a_m\neq0$, and let $h\in L^0(\partial T,E)$. Since $f_m\in U_{\ti_{m-1}}(T,E)$, there exists a subsequence $\la_n$, $n=1,2,\ld$ in $\ti_{m-1}\subset\ti_{m-2}\subset\cdots\subset\ti_1\subset\ti$ so that
\[
\lim_n\oo_{\la_n}(f_m)=a^{-1}_m h.
\]
Since $\la_n$ is a subsequence of $\ti_{m-2}\subset\cdots\subset\ti_1\subset\ti$ we have $\dis\lim_n\oo_{\la_n}(f_k)=0$ for $k=1,\ld,m-1$. It follows that  $\dis\lim_n\oo_{\la_n}(L)=h$.

Thus, $A-\{0\}\subset U_\ti(T,E)$ and the proof is completed. \qb
\section{Frequent Universality}\label{sec4}
\noindent
We turn now to frequent universality and this section is dedicated to the proof of Theorem \ref{two}.
\begin{Def}\label{def4.1}
A generalized harmonic function $f\in H(T,E)$ is frequently universal, if, for every non-empty open set $V\subset L^0(\partial T,E)$ the set $\{n\in\{0,1,2,\ld\}:\oo_n(f)\in V\}$ has strictly positive lower density. The set of such functions is denoted by $U_{FM}(T,E)$.
\end{Def}
\begin{thm}\label{thm4.2}
Under the above assumptions and notation the class $U_{FM}(T,E)$ is dense in $H(T,E)$.
\end{thm}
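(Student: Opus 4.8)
\noindent\emph{Proof strategy for Theorem \ref{thm4.2}.}
The plan is to build a single frequently universal $f\in H(T,E)$ that agrees with an arbitrarily prescribed $\f\in H(T,E)$ on $\bigcup_{n=0}^N T_n$; density of $U_{FM}(T,E)$ then follows, since $\f$ and $N$ are arbitrary. First I would reduce the frequent-universality requirement to a countable family of targets. Fix a dense sequence $h_j$, $j=1,2,\ld$, in $L^0(\partial T,E)$, each $h_j$ being $M_{\si(j)}$-measurable, and consider the basic balls $V_{j,s}=\{h:d_\bP(h,h_j)<1/s\}$. Since every non-void open $V$ contains some $V_{j,s}$ and lower density is monotone under inclusion, it suffices to arrange that for each pair $(j,s)$ the set $\{n:\oo_n(f)\in V_{j,s}\}$ has strictly positive lower density. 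I emphasise that, unlike the situation $\ssum_{y\in S(x)}w(x,y)=1$, here one cannot keep $\oo_n(f)$ close to a fixed $M_{\si(j)}$-measurable target along a whole block of consecutive levels; instead each approximation will be realised at a single deep level, and positive density will arise from revisiting every target sufficiently often.

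The combinatorial heart of the argument is a scheduling of these single-level hits, and this is where I expect the main obstacle to lie. For each target $(j,s)$ record a ``cost'' $g(j,s)$: the number of extra levels needed so that, when the top level is prescribed via Lemma \ref{lem3.5}, the uncontrolled boundary set $\bigcup_x B(\bi(x))$ has $\bP$-measure small enough to force $d_\bP(\oo_n(f),h_j)<1/s$. Exactly as in the proof of Theorem \ref{thm3.4}, choosing the descendants $\bi(x)$ along minimal-probability branches makes this measure $\le 2^{-g}$, so a fixed finite $g(j,s)$ suffices. I then fix a partition of the chunk indices $1,2,\ld$ into sets $A_m$ of positive lower density — for instance the dyadic partition $A_m=\{i:i\equiv 2^{m-1}\ (\mathrm{mod}\ 2^m)\}$, of density $2^{-m}$ — and enumerate the targets as $(j_m,s_m)$, choosing the enumeration so that the costs $g_m=g(j_m,s_m)$ grow slowly enough that $\sum_m 2^{-m}g_m<\infty$ (always possible, as each cost is finite and the low-density slots can absorb the expensive targets). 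Assigning to the $i$-th chunk the target indexed by the unique $m$ with $i\in A_m$ and gap $g_m$, the top level of the $i$-th chunk is $n_i=N+\sum_{k\le i}g_k$. The bookkeeping then gives the density: because $\sum_m 2^{-m}g_m<\infty$, the partial sums $n_i$ grow linearly in $i$, so among the levels $\le \ell$ there are $\asymp \ell$ chunk-tops and a fraction $\asymp 2^{-m}$ of them carry target $m$; hence for each $(j,s)$ the hit-levels have strictly positive lower density.

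With the schedule fixed, I would construct $f$ by iterating Lemma \ref{lem3.5}. Start from $F=\f$ on $\bigcup_{n=0}^N T_n$. Given $F$ harmonic on $\bigcup_{n=0}^{n_{i-1}}T_n$ and the $i$-th chunk carrying target $(j_m,s_m)$, extend $F$ to level $n_i$: on $T_{n_i}$ minus the excluded descendants set $F$ equal to the sector-wise constant value of $h_{j_m}$ (legitimate since $n_i\ge\si(j_m)$), then apply Lemma \ref{lem3.5} to fill the intermediate levels harmonically and to solve for the excluded values. By the choice of $g_m$ we get $d_\bP(\oo_{n_i}(F),h_{j_m})<1/s_m$. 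The resulting limit $f$ lies in $H(T,E)$ and satisfies $f|_{\bigcup_{n=0}^N T_n}=\f$. Finally, for each $(j,s)$ every chunk-top carrying that target lies in $\{n:\oo_n(f)\in V_{j,s}\}$, and by the previous paragraph these levels have positive lower density; hence $f\in U_{FM}(T,E)$, and density of $U_{FM}(T,E)$ in $H(T,E)$ follows. The delicate point throughout is the simultaneous density bookkeeping: one must hit infinitely many targets, each needing its own (unbounded over the family) gap, while guaranteeing that each individual target is nevertheless visited along a set of levels of positive lower density.
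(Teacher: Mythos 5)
Your proposal follows essentially the same route as the paper's proof: a one-step extension lemma (the paper's Lemma \ref{lem4.3}, proved like Lemma \ref{lem3.5}) realizes each basic-ball target at a single deep level, with the gap controlling the $\bP$-mass of the uncontrolled boundary sectors, and a positive-lower-density schedule of targets with summable density-weighted gaps yields frequent universality --- the paper's explicit ruler sequence $\el(k)=1+v_2(k)$, $r_k=\ssum_{n\le k}\el(n)$ from Lemma 3.4 of \cite{7} is exactly your dyadic partition with $g_m=m$, and density in $H(T,E)$ is obtained, as you do, by prescribing $\f$ on $\bigcup_{n=0}^N T_n$ first. The only caveat is your parenthetical claim that $\sum_m 2^{-m}g_m<\infty$ is ``always possible'' by reordering, which fails for arbitrary finite cost sequences (e.g.\ costs $4^k$ each occurring once admit no such bijection onto the dyadic slots), but is harmless here because the genuine gap-cost of the ball of radius $1/s$ is only about $\lceil\log_2 s\rceil$ and the measurability requirement $n_i\ge\si(j)$ is an absolute-level constraint that excludes at most finitely many chunks per target.
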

\noindent
{\bf Proof of Theorem \ref{thm4.2}}. First we show that $U_{FM}(T,E)\neq\emptyset$. If $g\in U_\ti(T,E)$ with $\ti=\{0,1,2,\ld\}$, then $h_k=\oo_k(g)$, $k=0,1,2,\ld$ is a dense in $L^0(\partial T,E)$ sequence and each $h_k$ is $M_k$-measurable. Fix such a sequence $h_k$, $k=0,1,2,\ld$\;.

In order to construct a function $f\in U_{FM}(T,E)$ it suffices that for every $k\in\{0,1,2,\ld\}$ the set $\Big\{n\in\{0,1,2,\ld\}:\oo_n(f)\in B{(h_k,\frac{1}{2^k})}\Big\}$ has strictly positive lower density, where
\[
B{\left(h_k,\frac{1}{2^k}\right)}=\bigg\{h\in L^0(\partial T,E):d_\bP(h,h_k)<\frac{1}{2^k}\bigg\}.
\]
We set $\el(k)=s+1$ where $k=2^s\cdot d$, $d$ odd and $r_k=\ssum^k_{n=1}\el(n)$. Lemma 3.4 of \cite{7} gives that
\begin{enumerate}
\item[i)] $r_{2^n}=2^{n+1}-1$
\item[ii)] For $m\le n$ it holds card$(\{k:1\le k\le 2^n:\el(k)=m\})=2^{n-m}$  and
\item[iii)] For $m\ge1$ the set $\{r_n:n\in\{1,2,\ld\}:\el(n)=m$ has strictly positive lower density.
\end{enumerate}

Thus, it suffices to have $\oo_{r_k}(f)\in B{\Big(h_{\el(k)},\frac{1}{2^{\el(k)}}\Big)}$.\vspace*{0.2cm}
The proof of the following lemma is similar to the proof of Lemma \ref{lem3.5} and is omitted.
\begin{lem}\label{lem4.3}
Let $n>0$ and $s>0$ be two natural numbers and $\f:T_0\cup\cdots\cup T_s\ra E$ be a function satisfying (\ref{eq1}) for every $x\in T_0\cup\cdots\cup T_{s-1}$. Then there exists an extension $\vPsi$ of $\f$, $\vPsi:T_0\cup\cdots\cup T_{s+n}\ra E$ satisfying (\ref{eq1}) for every $x\in T_0\cup\cdots\cup T_{s+n-1}$, such that, $\oo_{s+n}(\vPsi)\in B{(h_n,\frac{1}{2^n})}$.
\end{lem}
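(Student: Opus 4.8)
The plan is to reduce the statement to Lemma \ref{lem3.5}: I prescribe at the bottom level $T_{s+n}$ values that replicate $h_n$ except on a controllably small family of boundary sectors, and then let Lemma \ref{lem3.5} fill in the remaining (forced) values so that (\ref{eq1}) holds on $\bigcup_{m=0}^{s+n-1}T_m$. First I would record that, in the setting of Theorem \ref{thm4.2}, the function $h_n$ is $M_n$-measurable; since $s>0$ we have $s+n>n$ and $M_n\subset M_{s+n}$, so $h_n$ is $M_{s+n}$-measurable and hence constant on each boundary sector $B_z$ with $z\in T_{s+n}$. I denote by $c_z\in E$ the common value of $h_n$ on $B_z$.

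Next I would select, for every $x\in T_s$, a descendant $\bi(x)\in T_{s+n}$ to play the role of the free vertex of Lemma \ref{lem3.5}. These are chosen by following, at each of the $n$ steps from level $s$ down to level $s+n$, a child whose transition weight is $\le\frac12$; such a child exists at every vertex because $|S(\cdot)|\ge2$ and the weights $q(\cdot,\cdot)$ are positive and sum to $1$. Along such a path the conditional probabilities multiply to at most $(\frac12)^n$, so $p_{s+n}(B_{\bi(x)})\le(\frac12)^n\,p_s(B_x)$. Since the vertices $\bi(x)$, $x\in T_s$, are pairwise distinct, the sectors $B_{\bi(x)}$ are disjoint and
\[
\bP\Big(\bigcup_{x\in T_s}B_{\bi(x)}\Big)=\sum_{x\in T_s}p_{s+n}(B_{\bi(x)})\le\Big(\tfrac12\Big)^n\sum_{x\in T_s}p_s(B_x)=\frac{1}{2^n}.
\]

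I would then define $g:T_{s+n}\sm\{\bi(x):x\in T_s\}\ra E$ by $g(z)=c_z$ and apply Lemma \ref{lem3.5} with $N=s$ and $K=s+n$ to the data $\f$ and $g$. This yields an extension $\vPsi:\bigcup_{m=0}^{s+n}T_m\ra E$ agreeing with $\f$ on $\bigcup_{m=0}^{s}T_m$, satisfying (\ref{eq1}) on $\bigcup_{m=0}^{s+n-1}T_m$, and with $\vPsi(z)=c_z$ for every $z\in T_{s+n}$ off the free vertices. Consequently, for $e\notin\bigcup_x B_{\bi(x)}$, writing $z$ for the vertex of $e$ in $T_{s+n}$, we get $\oo_{s+n}(\vPsi)(e)=\vPsi(z)=c_z=h_n(e)$. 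Thus the integrand in $d_\bP(\oo_{s+n}(\vPsi),h_n)$ vanishes off $\bigcup_x B_{\bi(x)}$ and is strictly less than $1$ on it (because $t/(1+t)<1$), whence $d_\bP(\oo_{s+n}(\vPsi),h_n)<\bP(\bigcup_x B_{\bi(x)})\le\frac{1}{2^n}$ when that union is non-null and $d_\bP=0$ otherwise; in either case $\oo_{s+n}(\vPsi)\in B(h_n,\frac{1}{2^n})$.

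The step I expect to be the crux is precisely this measure estimate. The values $\vPsi(\bi(x))$ are \emph{not} at our disposal: they are forced by back-solving (\ref{eq1}) up the tree, exactly as in Lemma \ref{lem3.5}, so agreement with $h_n$ can fail on the sectors $B_{\bi(x)}$. The entire argument therefore hinges on routing each free descendant through low-weight children so that these uncontrollable sectors carry total probability at most $1/2^n$; once that is arranged, the rest is the mechanical extension provided by Lemma \ref{lem3.5}.
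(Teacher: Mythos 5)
Your proof is correct and follows essentially the route the paper intends: the paper omits the proof of Lemma \ref{lem4.3}, referring to Lemma \ref{lem3.5}, and the density argument in the proof of Theorem \ref{thm3.4} already contains exactly your key device of routing each free vertex $\bi(x)$ through children with $q\le\frac12$ so that $\bP\big(\bigcup_x B_{\bi(x)}\big)\le\frac{1}{2^n}$, prescribing the values of $h_n$ elsewhere on $T_{s+n}$, and invoking Lemma \ref{lem3.5}. Your measure estimate and the observation that the integrand $t/(1+t)<1$ off the exceptional set are handled correctly, so nothing further is needed.
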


Using Lemma \ref{lem4.3} we construct a function $f\in U_{FM}(T,E)$. Indeed, we apply Lemma \ref{lem4.3} for $k=1,2,3\ld$ for $s=r_{k-1}$, $n=\el(k)$, $s+n=r_k$. Then, we obtain $\oo_{r_k}(f)\in B{(h_{\el(k)},\frac{1}{2^{\el(k)}})}$. Thus, the lower density of the set $\{r_n:n\in\{0,1,\ld\},\el(n)=m\}$ is strictly positive. Therefore, $f\in U_{FM}(T,E)$ which implies that $U_{FM}(T,E)$ is non-void.

In order to prove that the set $U_{FM}(T,E)$ is dense in $H(T,E)$, it suffices to observe that in the previous construction of $f$ in $U_{FM}(T,E)$ we can arrange that an initial part of $f$ can be the restriction of any $\f\in H(T,E)$ on $T_0\cup\cdots\cup T_N$ for any given natural number $N$ and then apply Lemma \ref{lem4.3} for $s=r_{k-1}\ge N$, $n=\el(k)$, $s+n=r_k$; that is for $k\ge k(N)$. \qb
\begin{rem}\label{rem4.4}
Let $\ti=\{0=n_0<n_1<n_2<\ld\}$ be an infinite subset of $\{0,1,2,\ld\}$. Then a function $f\in H(T,E)$ belongs to $U^\ti_{FM}(T,E)$ if, for every non-empty open set $V\subset L^0(\partial T,E)$ the  set $\{k:\oo_{n_k}(f)\in V\}$ has strictly positive lower density. Then we can show that $U^\ti_{FM}(T,E)$ is dense in $H(T,E)$. One way to do this is to consider a new tree $\widetilde{T}=T_0\cup T_{n_1}\cup T_{n_2}\cup \ld$\;. For $x\in\tT$ the set of children in $\tT$ of $x$ denoted by $\tS(x)$ is defined as follow.

For $x\in T_{n_k}$, $k\in\{0,1,2,\ld\}$ the set $\tS(x)$ contains exactly all $y\in T_{n_{k+1}}$, such that, there exists $e\in\partial T$ with $\{x,y\}\subset e$. If $e\supset\{x=x_1,x_2,\ld,x_{m-1},x_m=y\}$  with $x_j\in S(x_{j-1})$, $j=2,\ld,m$, then we set
\[
\tq(x,y)=q(x_1,x_2)\cdot q(x_2,x_3)\cdot\,\cdots\,\cdot q(x_{m-1},x_m)>0 \ \ \text{and}
\]
\[
\widetilde{w}(x,y)=w(x_1,x_2)\cdot w(x_2,x_3)\cdot\,\cdots\,\cdot w(x_{m-1},x_m)\in F-\{0\}.
\]

It can be seen that if $f\in H(T,E)$, then, $f|_{\tT}\in H(\tT,E)$ and that the map $H(T,E)\ni f\ra f|_{\tT}\in H(\tT,E)$ is an onto isomorphism. Also $f\in U^\ti_{FM}(T,E)$ if and only if $f|_{\tT}\in U_{FM}(\tT,E)$.

According to Theorem \ref{thm4.2} the set $U_{FM}(\tT,E)$ is dense in $H(\tT,E)$. Thus, the inverse image of $U_{FM}(\tT,E)$, which is $U^\ti_{FM}(T,E)$ is dense in $H(T,E)$.

\end{rem}
\section{An additional assumption}\label{sec5}
\noindent

In this section, we prove Theorem \ref{three}. We assume in addition that for all $x\in T$ it holds
\begin{eqnarray}
\sum_{y\in S(x)}w(x,y)=1\in F.  \label{eq2}
\end{eqnarray}
Under this assumption, if $f:\dis\bigcup^N_{n=0}T_n\ra E$ is a function satisfying (\ref{eq1}) for all $x\in\dis\bigcup^{N-1}_{n=0}T_n$, then we can extend it on $\dis\bigcup^{N'}_{n=0}T_n$, $N<N'$ (or even on $T=\dis\bigcup^\infty_{n=0}T_n\Big)$, so that the extension satisfies $f(y)=f(x)$ when $y\in S(x)$ and $x\in\dis\bigcup^{N'-1}_{n=N}T_n$ (or $x\in\dis\bigcup^\infty_{n=N}T_n$, respectively). In the latter case the extension belongs to $H(T,E)$. Moreover, the set of such functions is dense in $H(T,E)$.

Under assumption (\ref{eq2}) and the previous assumptions, we will show that $U_{FM}(T,E)\cup\{0\}$ contains a vector space dense in $H(T,E)$.

Towards this end we consider the space $E^{\N}$, which is a topological vector space over the field $F$ under the cartesian topology. It has a translation invariant metric $\widetilde{d}$
\[
\widetilde{d}(x,y)=\sum^\infty_{n=0}\frac{1}{2^n}\frac{d(x_n,y_n)}{1+d(x_n,y_n)}
\]
where $x=(x_1,x_2,\ld)$, $y=(y_1,y_2,\ld)$ $x_j,y_j\in E$, for all $j=0,1,2,\ld$\;.

From the results of the previous sections the set $U_{FM}(T,E^\N)$ is dense in $H(T,E^\N)$.
\begin{clm}\label{clm5.1}
If $f=(f_0,f_1,\ld)\in U_{FM}(T,E^\N)$ where $f_j\in H(T,E)$, then the linear span $\lan f_0,f_1,\ld\ran$ is contained in $U_{FM}(T,E)\cup\{0\}$.
\end{clm}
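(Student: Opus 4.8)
The plan is to exploit the structure of universality in the product space $E^\N$ and transfer it coordinate-by-coordinate to $E$. Suppose $f=(f_0,f_1,\ld)\in U_{FM}(T,E^\N)$ with each $f_j\in H(T,E)$, and let $L=a_0f_0+\cdots+a_mf_m$ be a nonzero element of the linear span, with $a_j\in\mathbb{F}$ and $a_m\neq0$. Using assumption (\ref{eq2}), each $f_j$ and hence $L$ lies in $H(T,E)$, because the relation $\ssum_{y\in S(x)}w(x,y)=1$ guarantees that arbitrary finite linear combinations of generalized harmonic functions with arbitrary coefficients remain generalized harmonic. I must show $L\in U_{FM}(T,E)$, i.e.\ that for every non-void open $V\subset L^0(\partial T,E)$ the set $\{n:\oo_n(L)\in V\}$ has strictly positive lower density.

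The key observation is that $\oo_n$ acts coordinatewise: for $f=(f_0,f_1,\ld)$ we have $\oo_n(f)=(\oo_n(f_0),\oo_n(f_1),\ld)\in L^0(\partial T,E^\N)$, and the cartesian/product topology on $L^0(\partial T,E^\N)$ is compatible with that on $E^\N$. So I would fix a target $h\in L^0(\partial T,E)$ and an $\e>0$, and translate ``$\oo_n(L)$ is $\e$-close to $h$'' into an open condition on $\oo_n(f)$ in $L^0(\partial T,E^\N)$. Concretely, the map $\Phi:L^0(\partial T,E^\N)\ra L^0(\partial T,E)$ sending $(g_0,g_1,\ld)\mapsto a_0g_0+\cdots+a_mg_m$ is continuous (addition and scalar multiplication by fixed field elements are continuous in $E$, hence in $L^0(\partial T,E)$ with the convergence-in-probability metric, and only finitely many coordinates are involved). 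Because $\oo_n(L)=\Phi(\oo_n(f))$, the preimage $W=\Phi^{-1}(B(h,\e))$ is a non-void open subset of $L^0(\partial T,E^\N)$ — non-void precisely because the product structure lets me pick a point whose first $m+1$ coordinates solve $a_0g_0+\cdots+a_mg_m=h$ (solvable since $a_m\neq0$ in the field $\mathbb{F}$) and whose remaining coordinates are arbitrary.

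Then frequent universality of $f$ in $E^\N$ gives that $\{n:\oo_n(f)\in W\}$ has strictly positive lower density; but this set is contained in $\{n:\oo_n(L)\in B(h,\e)\}$, so the latter also has strictly positive lower density. Since $h$ and $\e$ were arbitrary and sets of the form $B(h,\e)$ form a basis for the topology of convergence in probability, this shows $L\in U_{FM}(T,E)$, completing the proof that $\lan f_0,f_1,\ld\ran\subset U_{FM}(T,E)\cup\{0\}$.

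I expect the main obstacle to be verifying nonemptiness and openness of $W=\Phi^{-1}(B(h,\e))$ cleanly in the product space, and in particular making sure the ``solve for the first $m+1$ coordinates'' step is legitimate: I need a genuine point of $L^0(\partial T,E^\N)$, not merely a formal preimage. The natural fix is to choose $g_m=a_m^{-1}\big(h-a_0g_0-\cdots-a_{m-1}g_{m-1}\big)$ for arbitrary measurable $g_0,\ld,g_{m-1}$ and arbitrary tail coordinates, which exhibits an explicit element of $W$; here the hypothesis $a_m\neq0$ and the field structure of $\mathbb{F}$ are exactly what is needed. The continuity of $\Phi$ is the other point requiring a short argument, but it follows from the continuity of the vector-space operations together with the fact that convergence in probability of each coordinate (which is what the product metric on $E^\N$ encodes) yields convergence in probability of the finite linear combination.
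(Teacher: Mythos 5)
Your proof is correct and takes essentially the route the paper intends: the paper omits its own argument, deferring to Claim 4.1 of \cite{7} with the sole modification that one multiplies by $a_m^{-1}$ rather than $|a_m|^{-1}$, which is exactly the device you use to produce a point of $W=\Phi^{-1}(B(h,\e))$ and to transfer frequent universality of $f$ in $L^0(\partial T,E^{\N})$ to the combination $L$. One small correction: the fact that $L=a_0f_0+\cdots+a_mf_m$ belongs to $H(T,E)$ does not depend on assumption (\ref{eq2}); it follows from the linearity of the defining relation $f(x)=\sum_{y\in S(x)}w(x,y)f(y)$ alone, so $H(T,E)$ is an $\mathbb{F}$-vector space in all cases and (\ref{eq2}) is needed elsewhere (for the constant extensions), not here.
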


The proof is similar to the proof of Claim 4.1 in \cite{7} and is omitted. The only difference is that since $a_k\in F$ and not to $\C$ the absolute values $|a_k|$ make no sense and we multiply some balls with $a^{-1}_k$ and not with $|a_k|^{-1}$.
\begin{clm}\label{clm5.2}
Under the assumption (\ref{eq2}) and all other assumptions, there exists a sequence $f_0,f_1,\ld,f_n,\ld$ dense in $H(T,E)$ such that $f=(f_0,f_1,f_2,\ld)$ belongs to $U_{FM}(T,E^\N)$.
\end{clm}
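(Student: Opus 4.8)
The plan is to produce a single function $f=(f_0,f_1,\ld):T\ra E^\N$ that is harmonic (equivalently, each coordinate $f_i\in H(T,E)$), that is frequently universal in $H(T,E^\N)$, and whose coordinate functions are dense in $H(T,E)$; together with Claim \ref{clm5.1} this will give the dense vector space $\lan f_0,f_1,\ld\ran\sm\{0\}\subset U_{FM}(T,E)$ needed for Theorem \ref{three}. First I would fix a sequence $(g_m)_{m\ge0}$ dense in $H(T,E)$, which exists since $H(T,E)$ is a separable metric space, and a sequence $(H_k)_{k\ge0}$ dense in $L^0(\partial T,E^\N)$ to serve as universality targets. The central point is that the coordinates of $f$ are determined by $f$ and become fixed on each initial part $\bigcup_{n\le N}T_n$ as soon as the construction passes level $N$; hence frequent universality of $f$ and density of its coordinates must be arranged simultaneously inside one level-by-level construction.

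The construction I propose imitates the proof of Theorem \ref{thm4.2} for the value space $E^\N$, but with a reservation scheme on the coordinates. I would build $f$ level by level along the schedule $r_k,\el(k)$ of Theorem \ref{thm4.2} and force $\oo_{r_k}(f)\in B(H_{\el(k)},2^{-\el(k)})$. The key observation is that the metric $\widetilde d$ on $E^\N$ weights the $i$-th coordinate by $2^{-i}$, so the tail beyond coordinate $P$ contributes at most $2^{-P}$; thus, to land in the ball $B(H_{\el(k)},2^{-\el(k)})$ it suffices to apply Lemma \ref{lem4.3} only to the finitely many coordinates $0,\ld,P_k$ with $P_k\approx\el(k)$, while every remaining coordinate may be extended freely. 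Invoking the additional assumption (\ref{eq2}), I would extend each untuned coordinate by the constant rule $f_i(y)=f_i(x)$ for $y\in S(x)$, which keeps it in $H(T,E)$. To secure coordinate density, fix an injection $(m,N)\mapsto j(m,N)\in\N$ with $i=j(m,N)$ chosen large relative to $N$, and reserve coordinate $i=j(m,N)$ to agree with $g_m$ on $\bigcup_{n\le N}T_n$, constant-extending it past level $N$.

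It remains to verify both properties and to confirm that the reservation never clashes with a universality step. For density, each reserved coordinate satisfies $f_{j(m,N)}=g_m$ on $\bigcup_{n\le N}T_n$, whence $\rho(f_{j(m,N)},g_m)$ is of order $2^{-N}$; since $(g_m)$ is dense and $N$ is arbitrary, $\{f_i:i\in\N\}$ is dense in $H(T,E)$. For frequent universality, properties (i)--(iii) of $r_k,\el(k)$ guarantee, exactly as in Theorem \ref{thm4.2}, that each target ball is revisited along a set of levels of positive lower density; the only thing to check is that the coordinates $0,\ld,P_k$ tuned at step $k$ are not reserved within their forbidden range. This is where the choice of $j$ pays off: a coordinate $i$ is tuned at step $k$ only when $\el(k)\gtrsim i$, and by the growth relation $r_{2^{n}}=2^{n+1}-1$ such steps occur only at levels $r_k$ far exceeding the reservation depth $N$ of any $(m,N)$ with $j(m,N)=i$, so on its constrained range $i$ is never tuned. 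The main obstacle is precisely this scheduling compatibility—reconciling ``follow $g_m$ deep into the tree on coordinate $i$'' with ``keep coordinate $i$ available whenever a fine $E^\N$-target must be hit''—and it is resolved by reserving only high-index coordinates for deep initial segments, together with the constant-extension freedom furnished by (\ref{eq2}).
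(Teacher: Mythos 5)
The paper gives no proof of Claim \ref{clm5.2} (it is omitted with a pointer to Claim 4.2 of \cite{7}), so there is nothing to compare against line by line; your construction is correct in outline and is the expected argument: run the frequent-universality schedule $r_k,\el(k)$ of Theorem \ref{thm4.2} in $E^{\N}$, exploit the $2^{-i}$ weighting of $\widetilde d$ so that only coordinates $0,\ld,P_k$ with $P_k\approx\el(k)$ need tuning at step $k$, and use the freedom of high-index coordinates (constant extension via (\ref{eq2})) to copy prescribed harmonic functions on deep initial segments, which yields density of the coordinate sequence. Two bookkeeping points should be pinned down in a full write-up. First, the tuned coordinates at step $k$ must be handled by one application of the $E^{P_k+1}$-valued version of Lemma \ref{lem4.3} with a \emph{common} set of corrector vertices $\bi(x)$ (these depend only on the tree, not on the target), since tuning the $P_k+1$ coordinates independently would let the exceptional sets union up to measure $(P_k+1)2^{-\el(k)}$ and destroy the estimate; with a common bad set of measure at most $2^{-\el(k)}$ and, say, $P_k=\el(k)+2$, one gets $d_\bP(\oo_{r_k}(f),H_{\el(k)})\le 2^{-P_k}+\tfrac{2}{3}2^{-\el(k)}<2^{-\el(k)}$, so your ``$P_k\approx\el(k)$'' does work but the constant $2\cdot 2^{-\el(k)}$ one gets from a naive bound does not. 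Second, the targets $H_j$ cannot be an arbitrary dense sequence in $L^0(\partial T,E^{\N})$: as in the proof of Theorem \ref{thm4.2} they should be taken $M_j$-measurable (e.g.\ $H_j=\oo_j(g)$ for some $g\in U_{\{0,1,2,\ld\}}(T,E^{\N})$), so that $H_{\el(k)}$ is $M_{r_k}$-measurable and exactly attainable off the corrector set at level $r_k$.
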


The proof is similar to the proof of Claim 4.2 in \cite{7} and is omitted.

Combining Claim \ref{clm5.1} with Claim \ref{clm5.2} we deduce the following.
\begin{thm}\label{thm5.3}
Under assumption (\ref{eq2}) and the other assumptions the set $U_{FM}(T,E)\cup\{0\}$ contains a vector space dense in $H(T,E)$.
\end{thm}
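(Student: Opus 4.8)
The plan is to obtain Theorem \ref{thm5.3} as a formal consequence of Claim \ref{clm5.1} and Claim \ref{clm5.2}, which together absorb all of the analytic work. First I would invoke Claim \ref{clm5.2} to produce a sequence $f_0,f_1,\ld$ that is dense in $H(T,E)$ and for which the single $E^\N$-valued function $f=(f_0,f_1,\ld)$ is frequently universal, i.e. $f\in U_{FM}(T,E^\N)$; this is the step where assumption (\ref{eq2}) is genuinely used. I would then set $A=\lan f_0,f_1,\ld\ran$, the $\F$-linear span, which is a vector space inside $H(T,E)$ because $H(T,E)$ is itself an $\F$-vector space, being the solution set of the linear system (\ref{eq1}). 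Density of $A$ is then immediate: $A$ already contains the sequence $(f_j)$, which is dense, so $A$ is dense in $H(T,E)$.

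The containment $A\setminus\{0\}\subset U_{FM}(T,E)$ is exactly the content of Claim \ref{clm5.1} applied to $f=(f_0,f_1,\ld)$, giving $A\subset U_{FM}(T,E)\cup\{0\}$. Assembling the two statements yields that $A$ is a vector space, dense in $H(T,E)$, lying inside $U_{FM}(T,E)\cup\{0\}$, which is precisely the assertion of the theorem.

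Since the substance lives in the two claims, I would verify their mechanisms. For Claim \ref{clm5.1}, given $L=a_0f_0+\cdots+a_mf_m$ with $a_m\neq0$ and a target $h\in L^0(\partial T,E)$, the idea is to feed the vector-valued target whose $m$-th slot is $a_m^{-1}h$ and whose earlier slots are $0$ to the frequent universality of $f$ in $E^\N$; on the resulting positive-lower-density set of times $n$ one has $\oo_n(f_k)$ near $0$ for the relevant $k<m$ and near $a_m^{-1}h$ for $k=m$, so that $\oo_n(L)=\sum_k a_k\oo_n(f_k)$ is near $h$ by continuity of addition and of scalar multiplication. The only care needed, relative to the complex case, is that $\F$ carries no absolute value, so in place of shrinking balls by $|a_k|^{-1}$ one pulls balls back through the continuous maps $v\mapsto a_k v$ and $v\mapsto a_k^{-1}v$.

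The main obstacle is Claim \ref{clm5.2}: constructing one $E^\N$-valued function that is simultaneously frequently universal and whose sequence of $H(T,E)$-components is dense. Here assumption (\ref{eq2}) is the crucial enabler, since it licenses the constant extension $f(y)=f(x)$ for $y\in S(x)$ while preserving (\ref{eq1}); this lets one commit a component to agree with a prescribed dense element on an arbitrarily large initial block of levels (securing density of the components) while retaining enough freedom on later levels to run the frequent-universality construction of Theorem \ref{thm4.2} on the product space $E^\N$. Without (\ref{eq2}) the free constant extension is unavailable, and the density of the components could not be reconciled with the rigidity the harmonic equations impose during the interleaved construction.
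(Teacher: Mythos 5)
Your proposal is correct and follows exactly the paper's route: the paper proves Theorem \ref{thm5.3} by combining Claim \ref{clm5.1} (the span of the components of a frequently universal $E^{\N}$-valued function lies in $U_{FM}(T,E)\cup\{0\}$) with Claim \ref{clm5.2} (existence of such an $f$ whose components form a dense sequence), precisely as you do. Your supplementary sketches of the two claims' mechanisms also match what the paper indicates, namely the adaptations of Claims 4.1 and 4.2 of \cite{7}, including the replacement of $|a_k|^{-1}$-scaled balls by preimages under multiplication by $a_k^{-1}$ and the use of assumption (\ref{eq2}) to permit constant extensions.
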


Combining this with Remark \ref{rem4.4} we obtain the following.
\begin{thm}\label{thm5.4}
Under assumption (\ref{eq2}) and the other assumptions and notation, the set $U^\ti_{FM}(T,E)\cup\{0\}$ contains a vector space dense in $H(T,E)$, for every infinite set $\ti\subset\{0,1,2,\ld\}$.
\end{thm}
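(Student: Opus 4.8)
The plan is to obtain Theorem \ref{thm5.4} as a formal consequence of Theorem \ref{thm5.3} by transporting the entire statement along the harmonic isomorphism of Remark \ref{rem4.4}. Fix an infinite set $\ti=\{0=n_0<n_1<n_2<\ld\}\subset\{0,1,2,\ld\}$ and build the contracted tree $\tT=T_0\cup T_{n_1}\cup T_{n_2}\cup\ld$ with its children sets $\tS(x)$, probabilities $\tq$ and weights $\widetilde{w}$, exactly as in that remark. The guiding observation is that, by the equivalence recorded there, $U^\ti_{FM}(T,E)$ is precisely the pullback of $U_{FM}(\tT,E)$ under the restriction map $f\mapsto f|_{\tT}$; hence the dense vector space supplied by Theorem \ref{thm5.3} applied to $\tT$ can simply be carried back to $H(T,E)$.

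The one hypothesis that must be checked before invoking Theorem \ref{thm5.3} on $\tT$ is that $\tT$ again satisfies the normalisation (\ref{eq2}), since Remark \ref{rem4.4} establishes the isomorphism but is stated before assumption (\ref{eq2}) is imposed. The remaining standing hypotheses transfer immediately: each $x\in T_{n_k}$ has $2^{\,n_{k+1}-n_k}\ge2$ children in $\tT$, the levels of $\tT$ are finite, $\widetilde{w}(x,y)\in\mathbb{F}\setminus\{0\}$ because $\mathbb{F}$ is a field, and $\tq(x,\cdot)$ is a positive probability vector. For (\ref{eq2}) I would compute, for $x\in T_{n_k}$ and paths $x=x_1,x_2,\ld,x_m=y$ with $m-1=n_{k+1}-n_k$,
\[
\sum_{y\in\tS(x)}\widetilde{w}(x,y)=\sum_{x_2\in S(x_1)}w(x_1,x_2)\sum_{x_3\in S(x_2)}w(x_2,x_3)\cdots\sum_{x_m\in S(x_{m-1})}w(x_{m-1},x_m),
\]
and evaluate the nested sums from the innermost outward: each collapses to $1$ by (\ref{eq2}), so the total is $1$. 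A short induction on $n_{k+1}-n_k$ makes this rigorous. This telescoping is the only computational content of the proof, and I expect it to be the main (though mild) obstacle.

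With (\ref{eq2}) verified for $\tT$, Theorem \ref{thm5.3} applies to it and yields a vector space $V\subset H(\tT,E)$, dense in $H(\tT,E)$, with $V\setminus\{0\}\subset U_{FM}(\tT,E)$. Let $\Phi:H(T,E)\to H(\tT,E)$, $\Phi(f)=f|_{\tT}$, be the onto isomorphism of Remark \ref{rem4.4}; it is linear and a homeomorphism, its inverse being the unique harmonic extension, which is continuous because each intermediate value is a finite $\mathbb{F}$-linear combination of values on $\tT$. I would then put $W=\Phi^{-1}(V)$. Since $\Phi$ is linear, $W$ is a vector space; and since $\Phi$ is a homeomorphism and $V$ is dense in $H(\tT,E)$, its preimage $W$ is dense in $H(T,E)$.

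It remains to identify $W$ with the required object. For $f\in W\setminus\{0\}$, injectivity of $\Phi$ gives $\Phi(f)=f|_{\tT}\in V\setminus\{0\}\subset U_{FM}(\tT,E)$, whence $f\in U^\ti_{FM}(T,E)$ by the equivalence of Remark \ref{rem4.4}; since $\Phi(0)=0$, we conclude $W\subset U^\ti_{FM}(T,E)\cup\{0\}$. Thus $W$ is a vector space, dense in $H(T,E)$ and contained in $U^\ti_{FM}(T,E)\cup\{0\}$, which is exactly the assertion of Theorem \ref{thm5.4}. \qb
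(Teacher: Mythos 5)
Your proposal is correct and follows exactly the route the paper intends: Theorem \ref{thm5.3} applied to the contracted tree $\tT$ of Remark \ref{rem4.4}, pulled back along the harmonic isomorphism $f\mapsto f|_{\tT}$; the paper states this in one line, while you usefully make explicit the telescoping verification that assumption (\ref{eq2}) passes to $\widetilde{w}$. (One trivial slip: a vertex of $\tT$ has \emph{at least} $2^{\,n_{k+1}-n_k}$ children, not exactly that many, but only the bound $\ge 2$ is needed.)
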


Next we will show that under assumption (\ref{eq2}) the set $U_{FM}(T,E)$ is disjoint from a $G_\de$ and dense subset of $H(T,E)$. In case where $E$ is a complete metric space we can say that $U_{FM}(T,E)$ is meager (and dense) in $H(T,E)$. We have similar results for the set $U^\ti_{FM}(T,E)$, for any infinite set $\ti\subset\{0,1,2,\ld\}$.
\begin{Def}\label{def5.5}
$X(T,E)$ is the set of functions $f\in H(T,E)$, such that, for every non-empty open set $V\subset L^0(\partial T,E)$, the upper density of the set $\{n\in\N:\oo_n(f)\in V\}$ is equal to 1.
\end{Def}
\begin{clm}\label{clm5.6}
$X(T,E)\cap U_{FM}(T,E)=\emptyset$.
\end{clm}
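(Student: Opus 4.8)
The plan is to argue by contradiction, producing a single obstruction out of two disjoint target sets. Throughout I assume $E\neq\{0\}$; if $E=\{0\}$ then $L^0(\partial T,E)$ is a singleton and both $X(T,E)$ and $U_{FM}(T,E)$ reduce to $\{0\}$, so the statement is to be read in the nondegenerate case. Suppose, towards a contradiction, that some $f$ lies in $X(T,E)\cap U_{FM}(T,E)$.

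First I would fix two disjoint non-empty open subsets $V_1,V_2$ of $L^0(\partial T,E)$. Such sets exist because $L^0(\partial T,E)$ is a metric space containing at least two distinct points: for any fixed $v\in E\setminus\{0\}$ the constant functions $g_0\equiv 0$ and $g_v\equiv v$ are $M_0$-measurable and differ on all of $\partial T$, a set of full $\bP$-measure, hence define distinct classes; the open balls $V_1=B(g_0,r)$ and $V_2=B(g_v,r)$ with $r=\tfrac12 d_\bP(g_0,g_v)>0$ are then disjoint. (One could instead invoke the earlier lemma that $L^0(\partial T,E)$ has no isolated points, but only the weaker fact that it is not a single point is needed here.)

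Now set $A_i=\{n\in\{0,1,2,\ld\}:\oo_n(f)\in V_i\}$ for $i=1,2$. Since $V_1\cap V_2=\emptyset$, the sets $A_1$ and $A_2$ are disjoint. From $f\in U_{FM}(T,E)$ applied to $V_1$, the lower density of $A_1$ is strictly positive, say
\[
\underline{d}(A_1)=\liminf_{N\to\infty}\frac{\#(A_1\cap\{1,\ld,N\})}{N}=\delta>0 .
\]
From $f\in X(T,E)$ applied to $V_2$, the upper density of $A_2$ equals $1$. But for every $N$ the disjoint sets $A_1\cap\{1,\ld,N\}$ and $A_2\cap\{1,\ld,N\}$ both lie in $\{1,\ld,N\}$, whence
\[
\frac{\#(A_2\cap\{1,\ld,N\})}{N}\le 1-\frac{\#(A_1\cap\{1,\ld,N\})}{N}.
\]
Taking $\limsup$ and using $\limsup(1-a_N)=1-\liminf a_N$ gives $\overline{d}(A_2)\le 1-\underline{d}(A_1)=1-\delta<1$, contradicting $\overline{d}(A_2)=1$.

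The argument has essentially no hard step; the only point requiring a little care is the very first one, namely guaranteeing two disjoint non-empty open sets, which is exactly where the hypothesis $E\neq\{0\}$ (equivalently, $L^0(\partial T,E)$ being a genuine, non-trivial metric space) enters. Everything else is the elementary fact that the upper density of a set is bounded above by $1$ minus the lower density of any disjoint set. Hence $X(T,E)\cap U_{FM}(T,E)=\emptyset$.
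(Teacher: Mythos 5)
Your proof is correct: the paper itself omits the argument (deferring to Proposition 3.9 of \cite{7}), and the reasoning you give --- two disjoint non-empty open sets $V_1,V_2$ in $L^0(\partial T,E)$, positive lower density of $\{n:\oo_n(f)\in V_1\}$ from frequent universality versus upper density $1$ of the disjoint set $\{n:\oo_n(f)\in V_2\}$ from membership in $X(T,E)$ --- is exactly the standard argument that reference contains. Your explicit handling of the degenerate case $E=\{0\}$ (where the claim as literally stated would fail) is a careful touch the paper leaves implicit.
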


The proof of Claim \ref{clm5.6} is similar to the proof of Proposition 3.9 in \cite{7} and is omitted.
\begin{clm}\label{clm5.7}
The set $X(T,E)$ is a $G_\de$ subset of $H(T,E)$.
\end{clm}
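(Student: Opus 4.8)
The plan is to exhibit $X(T,E)$ as a countable intersection of open subsets of $H(T,E)$, exploiting the countable base of $L^0(\partial T,E)$ together with the fact that an upper density, being a $\limsup$ of a sequence bounded above by $1$, equals $1$ precisely when it comes arbitrarily close to $1$ infinitely often.

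First I would reduce the quantifier over \emph{all} non-empty open $V$ to a countable family. Fix the dense sequence $h_j$, $j=1,2,\ld$ in $L^0(\partial T,E)$ from Section~\ref{sec2}; the balls $B(h_j,\tfrac1s)=\{h:d_\bP(h,h_j)<\tfrac1s\}$, $j,s\ge1$, form a base for the topology of convergence in probability. Since for every non-empty open $V$ and every $g\in V$ one can find $j,s$ with $g\in B(h_j,\tfrac1s)\subset V$, and since the inclusion $\{n:\oo_n(f)\in B(h_j,\tfrac1s)\}\subset\{n:\oo_n(f)\in V\}$ forces the upper density of the larger set to equal $1$ as soon as that of the smaller one does, I obtain
\[
X(T,E)=\bcap_{j,s\ge1}\Big\{f\in H(T,E):\ \overline{d}\big(\{n:\oo_n(f)\in B(h_j,\tfrac1s)\}\big)=1\Big\},
\]
where $\overline{d}$ denotes upper density. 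As a countable intersection of $G_\de$ sets is $G_\de$, it suffices to prove that each of these countably many sets is $G_\de$.

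Next I would unfold the condition ``upper density $=1$''. Writing $a_N(f)=\tfrac{1}{N+1}\#\{0\le n\le N:\oo_n(f)\in B(h_j,\tfrac1s)\}$, one has $0\le a_N(f)\le1$, so $\limsup_N a_N(f)=1$ if and only if for every $p\ge1$ and every $M\ge1$ there is some $N\ge M$ with $a_N(f)>1-\tfrac1p$. Hence the $(j,s)$-th set above equals
\[
\bcap_{p,M\ge1}\ \bcup_{N\ge M}\Big\{f\in H(T,E):\ \#\{0\le n\le N:\oo_n(f)\in B(h_j,\tfrac1s)\}>\big(1-\tfrac1p\big)(N+1)\Big\}.
\]
It then remains only to check that each innermost set is open, for then the union over $N\ge M$ is open and the whole expression is a countable intersection of open sets, i.e. $G_\de$.

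Finally, openness is the routine combinatorial step. Recall from the proof of Theorem~\ref{thm3.4} that $E(n,j,s)=\{f:d_\bP(\oo_n(f),h_j)<\tfrac1s\}$ is open in $H(T,E)$, because $f\mapsto\oo_n(f)$ is continuous (it depends only on the finitely many values of $f$ on $T_n$). Letting $k$ be the least integer exceeding $(1-\tfrac1p)(N+1)$, the event ``at least $k$ of the indices $0\le n\le N$ satisfy $\oo_n(f)\in B(h_j,\tfrac1s)$'' is
\[
\bcup_{\substack{S\subset\{0,\ld,N\}\\ \#S=k}}\ \bcap_{n\in S}E(n,j,s),
\]
a finite union of finite intersections of open sets, hence open. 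Combining the three displays expresses $X(T,E)$ as a $G_\de$ subset of $H(T,E)$. I expect the only genuinely delicate point to be the reduction in the first step: one must verify that ``upper density $=1$'' really does transfer from a basic ball to every open superset, and conversely that testing basic balls suffices. This is exactly where the monotonicity of upper density under inclusion and the base property of the balls $B(h_j,\tfrac1s)$ enter, and it is the only place where separability of $L^0(\partial T,E)$ is used.
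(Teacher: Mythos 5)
Your proof is correct, and it is exactly the standard argument that the paper omits (deferring to Proposition 3.8 of \cite{7}): reduce the quantifier over open sets to the countable base of balls $B(h_j,\tfrac1s)$ via monotonicity of upper density, rewrite ``upper density $=1$'' as a countable intersection over $p,M$ of unions over $N\ge M$, and observe that each resulting condition is a finite Boolean combination of the open sets $E(n,j,s)$ already used in the proof of Theorem~\ref{thm3.4}. All steps, including the edge cases in the counting argument, check out.
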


The proof of Claim \ref{clm5.7} is similar to part of the proof of Proposition 3.8 in \cite{7} and is omitted.
\begin{prop}\label{prop5.8}
Under assumption (\ref{eq2}) and the other assumptions and notation the set $X(T,E)$ is dense in $H(T,E)$.
\end{prop}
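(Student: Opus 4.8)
The plan is to exploit assumption (\ref{eq2}) exactly as it was used at the opening of this section: it permits extending a partially defined harmonic function by \emph{freezing}, that is, by setting $f(y)=f(x)$ for $y\in S(x)$, since then $\sum_{y\in S(x)}w(x,y)f(y)=f(x)\sum_{y\in S(x)}w(x,y)=f(x)$, so (\ref{eq1}) is preserved. Consequently, if $f$ is harmonic up to level $a$, one can extend it so that $\oo_n(f)=\oo_a(f)$ for every $n$ in an arbitrarily long block $a\le n\le b$ while staying inside $H(T,E)$. This freezing device is what makes upper density $1$ attainable, and it is available only because of (\ref{eq2}).

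First I would fix $\f\in H(T,E)$ and $N\in\N$ and construct $f\in X(T,E)$ with $f|_{\bigcup^N_{n=0}T_n}=\f|_{\bigcup^N_{n=0}T_n}$; density then follows since $\f,N$ are arbitrary. Let $h_j$, $j=1,2,\ld$ be the dense sequence in $L^0(\partial T,E)$ with $h_j$ being $M_{\ti(j)}$-measurable, and fix an enumeration $(j_k,p_k)$ of $\N\times\N$ such that, for each fixed $j$, the set $\{k:j_k=j\}$ is infinite and $p_k\to+\infty$ along it.

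Then I would build $f$ in stages $N=s_0<s_1<\cdots$. At stage $k$ the function is harmonic on $\bigcup^{s_{k-1}}_{n=0}T_n$; first, by the density argument in the proof of Theorem~\ref{thm3.4} (that is, by Lemma~\ref{lem3.5}), I extend it harmonically to some level $a_k>s_{k-1}$ with $d_\bP(\oo_{a_k}(f),h_{j_k})<2^{-k}$, and then I freeze up to a level $b_k$ chosen so large that $a_k/b_k<1/p_k$, keeping $\oo_n(f)=\oo_{a_k}(f)$ for all $a_k\le n\le b_k$, and set $s_k=b_k$. Since $s_k\to+\infty$ and each step respects (\ref{eq1}), the limit $f$ lies in $H(T,E)$ and agrees with $\f$ on the first $N$ levels. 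To check $f\in X(T,E)$, given a non-empty open $V$ I pick $j$ and $\de>0$ with $B(h_j,\de)\subset V$; for every stage $k$ with $j_k=j$ and $2^{-k}<\de$ one has $\oo_n(f)\in V$ for all $n\in[a_k,b_k]$, hence
\[
\frac{\#\{n\le b_k:\oo_n(f)\in V\}}{b_k}\ge\frac{b_k-a_k}{b_k}>1-\frac{1}{p_k},
\]
and letting $k\to+\infty$ along these stages forces $p_k\to+\infty$, so the upper density of $\{n:\oo_n(f)\in V\}$ equals $1$.

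The hard part will be the quantitative bookkeeping: arranging the enumeration $(j_k,p_k)$ so that \emph{every} target ball is revisited with $p_k\to\infty$, and choosing $b_k$ large enough that the frozen block dominates the prefix ($a_k/b_k<1/p_k$) uniformly, so that the single construction yields upper density $1$ for all open $V$ simultaneously. The role of (\ref{eq2}) is essential precisely here, since without it the freezing step would leave $H(T,E)$ and one could not produce the long constant runs that drive the upper density to $1$.
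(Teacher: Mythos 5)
Your proposal is correct and follows essentially the same strategy as the paper's own (sketched) proof: use Lemma \ref{lem3.5} to approximate a prescribed target at some level, then invoke assumption (\ref{eq2}) to ``freeze'' the function over a long block of levels so that the constant run occupies a proportion of $\{0,\ld,b_k\}$ tending to $1$, and iterate over a countable base of $L^0(\partial T,E)$ so that every basic open set is revisited infinitely often. Your explicit bookkeeping via the enumeration $(j_k,p_k)$ of $\N\times\N$ is just a more detailed version of the paper's ``repeat with $B_1,B_2,B_3$ and so on.''
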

\noindent
{\bf Sketch of the proof}

Let $\f\in H(T,E)$ and $N\in\{0,1,2,\ld\}$. It suffices to construct $f\in X(T,E)$ so that
\[
f\Big|_{\bigcup\limits^N_{n=0}T_n}=\f\Big|_{\bigcup\limits^N_{n=0}T_n}.
\]

Since $E$ is separable, there exists a sequence $B_n$ of balls in $L^0(\partial T,E)$, which is a base for the topology of $L^0(\partial T,E)$.

We extend $\f\Big|_{\bigcup\limits^N_{n=0}T_n}$ to a function $\f_1:\dis\bigcup^{N_1}_{n=0}T_n\ra E$ where $N_1>N$ so that (\ref{eq1}) holds for all $x\in\dis\bigcup^{N_1-1}_{n=0}T_n$ and $\oo_{N_1}(f)\in B_1$.  Then, because of assumption (\ref{eq2}), we can further extend $\f_1$ so that $\oo_{N_1}(\f_1)=\oo_{N_1+1}(\f_1)=\cdots=\oo_{N_1+k_1}(\f_1)\in B_1$, where $k_1$ is big enough so that $\dfrac{k_1}{N_1+k_1}$ is very close to 1.

Then we extend again this function so that $\oo_n(\f_1)\in B_1$ for a lot of $n$'s and then we do the same so that $\oo_n(\f)\in B_2$ for a great numbers of consecutive $n$'s.

After that we repeat the same with $B_1,B_2,B_3$ and so on.

In this way we define $f\in H(T,E)$ so that for every $n \in\{1,2,3,\ld\}$ the upper density of the set $\{k\in\N:\oo_k(f)\in B_n\}$ is equal to 1. This function $f$ belongs to $X(T,E)$. Because the initial part of $f$ coincides with the initial part of $\f\in H(T,E)$ and this is possible for every $\f$, the set $X(T,E)$ is dense in $H(T,E)$. \qb\vspace*{0.2cm}

Combining the above facts we deduce.
\begin{thm}\label{thm5.9}
Under assumption (\ref{eq2}) and the other assumptions $U_{FM}(T,E)$ (and $U^\ti_{FM}(T,E)$) is disjoint from a $G_\de$ and dense subset of $H(T,E)$.

\end{thm}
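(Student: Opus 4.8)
The plan is to exhibit the set $X(T,E)$ of Definition \ref{def5.5} as the required $G_\de$ and dense set disjoint from $U_{FM}(T,E)$; the three preceding statements have been arranged precisely so that they assemble into the theorem. Concretely, Claim \ref{clm5.7} gives that $X(T,E)$ is $G_\de$ in $H(T,E)$, Proposition \ref{prop5.8} (which is where assumption (\ref{eq2}) enters) gives that $X(T,E)$ is dense in $H(T,E)$, and Claim \ref{clm5.6} gives that $X(T,E)\cap U_{FM}(T,E)=\emptyset$. Combining these three facts immediately yields that $U_{FM}(T,E)$ is disjoint from the $G_\de$ and dense set $X(T,E)$, which is the first assertion. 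Note that disjointness and the $G_\de$ property hold unconditionally, while only the density of $X(T,E)$ requires (\ref{eq2}), consistently with the hypotheses of the theorem.

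For the variant $U^\ti_{FM}(T,E)$, the plan is to reduce to the case just proved by passing to the auxiliary tree $\tT=T_0\cup T_{n_1}\cup T_{n_2}\cup\cdots$ of Remark \ref{rem4.4}, where $\ti=\{0=n_0<n_1<n_2<\cdots\}$. First I would check that assumption (\ref{eq2}) is inherited by $(\tT,\widetilde w)$: for $x\in T_{n_k}$, grouping the descendants in $T_{n_{k+1}}$ according to their ancestor in $S(x)$ and using (\ref{eq2}) level by level gives the telescoping identity
\[
\sum_{y\in\tS(x)}\widetilde w(x,y)=\sum_{c\in S(x)}w(x,c)\,\pi(c)=\sum_{c\in S(x)}w(x,c)=1,
\]
where $\pi(c)$ denotes the sum, over all $y\in T_{n_{k+1}}$ lying below $c$, of the products of the weights $w$ along the path from $c$ to $y$; each $\pi(c)$ equals $1$ by a downward induction on $n_{k+1}-n_k$ using (\ref{eq2}). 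Hence $(\tT,\widetilde w)$ also satisfies (\ref{eq2}), and applying the first part of the theorem to $\tT$ produces a $G_\de$ and dense subset $X(\tT,E)\subset H(\tT,E)$ with $X(\tT,E)\cap U_{FM}(\tT,E)=\emptyset$. Now I invoke Remark \ref{rem4.4}: the restriction map $\Phi:H(T,E)\to H(\tT,E)$, $\Phi(f)=f|_{\tT}$, is an onto isomorphism and a homeomorphism for the cartesian topologies, and satisfies $f\in U^\ti_{FM}(T,E)$ iff $\Phi(f)\in U_{FM}(\tT,E)$. Therefore $\Phi^{-1}\big(X(\tT,E)\big)$ is $G_\de$ (preimage of a $G_\de$ under a continuous map), dense (preimage of a dense set under a homeomorphism), and disjoint from $U^\ti_{FM}(T,E)$, which settles the variant.

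The argument is essentially a bookkeeping assembly of previously established facts, so there is no single deep obstacle. The only points requiring genuine care are the preservation of (\ref{eq2}) under passage to $\tT$, handled by the display above, and the verification that $\Phi$ is a homeomorphism carrying the three properties correctly. The latter is immediate once one observes that $\Phi$ forgets the coordinates indexed by vertices outside $\tT$, while those forgotten coordinates are determined by the retained ones through the harmonicity relations (\ref{eq1}); thus both $\Phi$ and $\Phi^{-1}$ depend continuously on finitely many coordinates at each vertex, so $\Phi$ is a homeomorphism and both images and preimages of $G_\de$ and of dense sets behave as required.
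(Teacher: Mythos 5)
Your proof is correct and follows essentially the same route as the paper, which likewise obtains the theorem by combining Claim \ref{clm5.6} (disjointness), Claim \ref{clm5.7} ($G_\de$), and Proposition \ref{prop5.8} (density, where (\ref{eq2}) is used), and handles $U^\ti_{FM}(T,E)$ via the reduction to $\tT$ from Remark \ref{rem4.4}. Your verification that (\ref{eq2}) is inherited by $(\tT,\widetilde w)$ is a useful detail the paper leaves implicit.
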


\noindent
N. Biehler \\
National and Kapodistrian University of Athens \\
Department of Mathematics,\\
%
%
%
e-mail: nikiforosbiehler@outlook.com \vspace*{0.3cm}\\
\noindent
V. Nestoridis \\
National and Kapodistrian University of Athens \\
Department of Mathematics,\\
%
%
%
e-mail: vnesto@math.uoa.gr\vspace*{0.3cm} \\
E. Nestoridi\\
Princeton University \\
Department of Mathematics,\\
e-mail: exn@princeton.edu


\begin{thebibliography}{99}
\bibitem{1} E. Abakumov, Universal harmonic function in the sense of Menchoff, private communication.
%
\bibitem{2} E. Abakumov, V. Nestoridis, M. Picardello, Universal properties of harmonic functions on trees, J. Math. Anal. Appl. 445(2) (2017), p.p. 1181-1187.
%
\bibitem{3} E. Abakumov, V. Nestoridis, M. Picardello, Frequently dense harmonic functions and universal martingales on trees, Proceedings of AMS, to appear.
%
\bibitem{4} F. Bayart, Universal radial limits of holomorphic functions, Glasg. Math. J. 47(2) (2005), p.p. 261-267.
%
\bibitem{5} F. Bayart, S. Grivaux, Frequently hypercyclic operators, Trans. Am. Math. Soc. 358(11) (2006), pp. 5083-5117.
%
\bibitem{6} F. Bayart, K.-G. Grosse\,-\,Erdmann, V. Nestoridis, C. Papadimitropoulos, Abstract theory of universal series and applications, Proc. Lond. Math. Soc. (3), 96(2) (2008), p.p. 417-463.
%
\bibitem{7} N. Biehler, V. Nestoridis, A. Stavrianidi, Algebraic genericity of frequently universal harmonic functions on trees, J. Math. Anal. Appl. 489(1), 1 September 2020, 124132 to appear, doi.org 10.1016.jmaa 2020. 124132, see also arxiv:1908.09767.
%
\bibitem{8} S. Charpentier, On the closed subspaces of universal series in Banach spaces and Fr\'echet spaces, Stud. Math. 198(2) (2010), p.p. 121-145.
%
%
%
\bibitem{9} K. G. Grosse\,-\,Erdmann, Universal families and hypercyclic operators, Bull. Am. Math. Soc. (N.S.), 36(3) (1999), p.p. 345-381.
%



\bibitem{10} J.-P. Kahane, Baire's category theorem and trigonometric series, J. Anal. Math., 80(2000), p.p. 143-182.
%
 %
\bibitem{11} A. Kechris, Classical Descriptive Set Theory, Graduate Texts in Mathematics, vol. 156, Springer\,-\,Verlag, Berlin.
%
\bibitem{12} A. Kor\'{a}nyi, M. Picardello, M. H. Taibleson, Hardy spaces on non homogeneous trees, Symp. Math., 29(1988), p.p. 206-265.
%
%
              holomorphic functions, Adv. Math., \textbf{157}, (2001), 1539-1551
%
\bibitem{13} D. Menchoff, Sur les s\'{e}ries trigonometriques universelles, C. R. (Dokl.) Acad. Sci. VRSS, XLIX (2) (1945), p.p. 79-82.
%
\bibitem{14} V. Nestoridis, Universal Taylor series, Ann. Inst. Fourier (Grenoble), 46(5) (1996), p.p. 1293-1306.
%
%
%
\end{thebibliography}
\end{document}